\DeclareMathAlphabet{\mathpzc}{OT1}{pzc}{m}{it}
\numberwithin{equation}{section}
\DeclareMathAlphabet{\mathpzc}{OT1}{pzc}{m}{it}
\newtheorem{thm}{Theorem}[section]
\newtheorem{lem}[thm]{Lemma}
\newtheorem{prop}[thm]{Proposition}
\theoremstyle{definition}
\newtheorem{rem}[thm]{Remark}
\newcommand{\eps}{\ensuremath{\varepsilon}}
\begin{document}
\begin{center}
\LARGE{\textbf{A Combination of Downward Continuation and Local Approximation for Harmonic Potentials}}
\\[3ex]\normalsize C. Gerhards\footnote{Geomathematics Group, University of Kaiserslautern, PO Box 3049, 67663 Kaiserslautern
\\e-mail: gerhards.christian@gmail.com}
\\[5ex]
\end{center}

\textbf{Abstract.}
This paper presents a method for the approximation of harmonic potentials that combines downward continuation of globally available data on a sphere $\Omega_R$ of radius $R$ (e.g., a satellite's orbit) with locally available data in a subregion $\Gamma_r$ of the sphere $\Omega_r$ of radius $r<R$ (e.g., the spherical Earth's surface). The approximation is based on a two-step algorithm motivated by spherical multiscale expansions: First, a convolution with a scaling kernel $\Phi_N$ deals with the downward continuation from $\Omega_R$ to $\Omega_r$, while in a second step, the result is locally refined by a convolution on $\Omega_r$ with a wavelet kernel $\tilde{\Psi}_N$. The kernels $\Phi_N$ and $\tilde{\Psi}_N$ are optimized in such a way that the former behaves well for the downward continuation while the latter shows a good localization in $\Gamma_r$. 
The concept is indicated for scalar as well as vector potentials. 
\\

\textbf{Key Words.} Harmonic potentials,  downward continuation, spatial localization, spherical basis functions.
\\

\textbf{AMS Subject Classification.} 31B20, 41A35, 42C15, 65D15, 86-08, 86A22

\section{Introduction}

Recent satellite missions monitoring the Earth's gravity and magnetic field supply a large amount of data with a fairly good global coverage. They are complemented by local/regional measurements at or near the Earth's surface. While satellite data is well-suited for the reconstruction of large-scale structures, it fails for spatially localized features (due to the involved downward continuation). The opposite is true for locally/regionally available ground data. It is well-suited to capture local phenomena but fails for global trends. Therefore, in order to obtain high-resolution gravitational models, such as EGM2008 (cf. \cite{pavlis}), or geomagnetic models, such as NGDC-720\footnote{http://geomag.org/models/ngdc720.html}, it becomes necessary to combine both types of data. The upcoming Swarm satellite mission, e.g., aims at reducing the (spectral) gap between satellite data and local/regional data at or near the Earth's surface (cf. \cite{swarm}) by supplying improved data from a constellation of three satellites. Making use of this improved situation requires methods that address the different properties of satellite and ground data. 

In order to deal with local/regional data sets, various types of localizing spherical basis functions have been developed during the last years and decades. Among them are spherical splines (e.g., \cite{freeden81}, \cite{shure82}), spherical cap harmonics (e.g., \cite{haines85}, \cite{thebault}), and Slepian functions (e.g., \cite{plattner13}, \cite{plattner13b}, \cite{simons06}, \cite{simons10}). Spherical multiscale methods go a bit further and allow a scale-dependent adaptation of scaling and wavelet kernels (see, e.g., \cite{dahlke}, \cite{freewind}, \cite{hol}, and \cite{sweldens} for the early development). They are particularly well-suited to combine global and local/regional data sets of different resolution and have been applied intensively to problems in geomagnetism and gravity field modeling, e.g., 
in \cite{bayer01}, \cite{chambodut}, \cite{freeger}, \cite{freeschrei06}, \cite{freewind}, \cite{ger12}, \cite{ger13}, \cite{hol03}, \cite{klees07}, \cite{mai05}, \cite{mayer06}, and \cite{michel01}. Matching pursuits as described, e.g., in \cite{mallat} have been adapted more recently to meet the requirements of geoscientific problems (cf. \cite{michel12}, \cite{fischer13}). Their dictionary structure allows the inclusion of a variety of global and spatially localizing basis functions, of which adequate functions are selected automatically dependent on the given data.

However, when combining satellite data on a sphere $\Omega_R$ and local/regional data on a sphere $\Omega_r$ of radius $r<R$, not only methods that are able to deal with the local/regional aspect become necessary but also those that deal with the ill-posedness of downward continuation of data on $\Omega_R$. Typically, those two problems are treated separately. (Spherical) downward continuation itself has been studied intensively, e.g., in \cite{bauer13}, \cite{freeden99}, \cite{freeden01}, \cite{schneider98}, \cite{pereverzev10}, \cite{naumov}, \cite{perev99} (for the more mathematical aspects) and \cite{cooper}, \cite{maimay03}, \cite{trompat}, \cite{tziavos} (for a stronger focus on the geophysical application). A particular approach to regularize downward continuation is given by multiscale methods (see, e.g., \cite{freeden99}, \cite{freeden01}, \cite{schneider98}, \cite{maimay03}, \cite{perev99} for the particular case of spherical geometries). 
Yet, it seems that no approach intrinsically combines the two problems, especially regarding that downward continuation is required for the data on $\Omega_R$ but not for the local/regional data on $\Omega_r$.

It is the goal of this paper, motivated by some of the previous multiscale methods, to introduce a two-step approximation reflecting such an intrinsic combination. More precisely, in the first step only data on $\Omega_R$ is used and downward continued by convolution with a scaling kernel $\Phi_N$. In the second step, the approximation is refined by convolving the local/regional data on $\Omega_r$ with a spatially localizing wavelet kernel $\tilde{\Psi}_N$. The connection of the two steps is given by the construction of the kernels $\Phi_N$,  $\tilde{\Psi}_N$: Both kernels are designed in such a way that they simultaneously minimize a functional that contains a penalty term for the downward continuation and a penalty term for spatial localization.  Thus, it is not the goal to first get a best possible approximation from satellite data only and then refine this approximation with local/regional data. It is rather to find a balance between the data on $\Omega_R$ and the data on $\Omega_r$ that in some sense 
leads to a best overall approximation.

\begin{figure}
\begin{center}
\scalebox{0.3}{\includegraphics{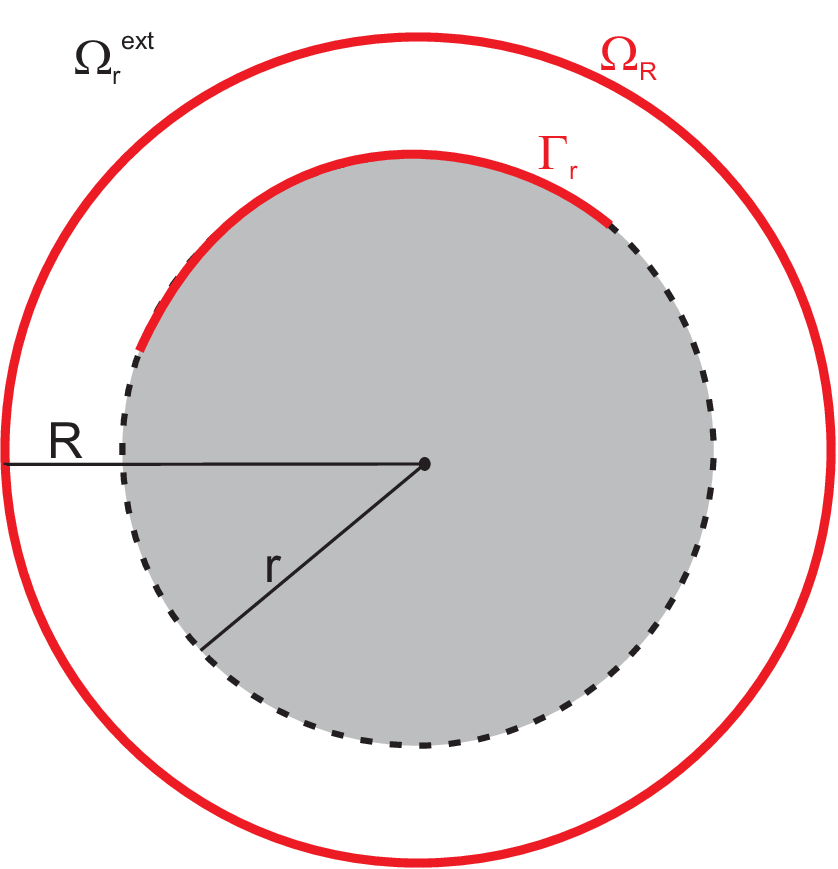}}
\end{center}
\caption{The given data situation.}\label{fig:datasit}
\end{figure}

\subsection{Brief Description of the Approach}

In the exterior of the Earth, the gravity and the crustal magnetic field can be described by a harmonic potential $U$. From satellite measurements we obtain data $F_1$ on a spherical orbit $\Omega_R=\{x\in\mathbb{R}^3:|x|=R\}$ and from ground or near-ground measurements data $F_2$ in a subregion $\Gamma_r$ of the spherical Earth surface $\Omega_r$ of radius $r<R$ (cf. Figure \ref{fig:datasit}). The problem to solve is
\begin{align}
\Delta U=0,\quad&\textnormal{ in }\Omega_r^{ext},\label{eqn:11}
\\U=F_1,\quad&\textnormal{ on }\Omega_R,\label{eqn:12}
\\U=F_2,\quad&\textnormal{ on }\Gamma_r,\label{eqn:13}
\end{align}
with $\Omega_r^{ext}=\{x\in\mathbb{R}^3:|x|>r\}$ denoting the space exterior to the sphere $\Omega_r$. Of interest to us is the restriction $U^+=U|_{\Gamma_r}$, i.e., the potential in the subregion $\Gamma_r$ of the Earth's surface. The knowledge of $F_1$ on $\Omega_R$ already supplies all information necessary to obtain $U^+$. However, since it is only available by measurements in discrete points on $\Omega_R$, possible noise and the involved downward continuation render $F_1$ only suitable to approximate the coarser structures of $U^+$. Additional measurements of $F_2$ in $\Gamma_r$ improve the situation.  

Throughout this paper, we use an approximation $U_N$ of $U^+$ of the form 
\begin{align}\label{eqn:firstapprox}
{U_N={T}_N[F_1]+{\tilde{W}}_N[F_2].}
\end{align}
It is motivated by spherical multiscale representations as introduced in \cite{schneider98} and \cite{freewind}: $T_N$ reflects a regularized version of the downward continuation operator, acting as a scaling transform on $\Omega_R$ with the convolution kernel 
\begin{align}\label{eqn:phin}
{\Phi_N(x,y)=\sum_{n=0}^N\sum_{k=1}^{2n+1}\Phi_N^\wedge(n)\frac{1}{r}Y_{n,k}\left(\xi\right)\frac{1}{R}Y_{n,k}\left(\eta\right).}
\end{align}
We frequently use $\xi$ and $\eta$ to abbreviate the unit vectors $\frac{x}{|x|}$ and $\frac{y}{|y|}$, respectively, and write $r=|x|$, $R=|y|$. Furthermore, $\{Y_{n,k}\}_{n=0,1,\ldots;k=1,\ldots,2n+1}$ denotes a set of orthonormal spherical harmonics of degree $n$ and order $k$. In order to refine the approximation with local data we use the operator $\tilde{W}_N$, which acts as a wavelet transform on $\Gamma_r$ with the convolution kernel
\begin{align}\label{eqn:psin}
{\tilde{\Psi}_N(x,y)=\sum_{n=0}^{\lfloor\kappa N\rfloor}\sum_{k=1}^{2n+1}\tilde{\Psi}_N^\wedge(n)\frac{1}{r}Y_{n,k}\left(\xi\right)\frac{1}{r}Y_{n,k}\left(\eta\right), }
\end{align}
where $\kappa>1$ is a fixed constant (reflecting the higher resolution desired for the refinement). The coefficients ${\Phi}_N^\wedge(n)$ and $\tilde{\Psi}_N^\wedge(n)$ are typically called 'symbols' of the corresponding kernels. They are coupled by the relation $\tilde{\Psi}_N^\wedge(n)=\tilde{\Phi}_N^\wedge(n)-{\Phi}_N^\wedge(n)\big(\frac{r}{R}\big)^n$ (see Section \ref{subsec:comb} for details), where $\tilde{\Phi}_N^\wedge(n)$ has been introduced as an auxiliary symbol. This coupling guarantees a smooth transition from the use of global satellite data on $\Omega_R$ to local data in $\Gamma_r$. The optimization of the kernels is done by simultaneously choosing symbols $\Phi_N^\wedge(n)$, $\tilde{\Phi}_N^\wedge(n)$ that minimize a functional $\mathcal{F}$ reflecting the desired properties. 

The general setting and notation as well as the choice of the functional $\mathcal{F}$ are described in Sections \ref{sec:sett} and \ref{sec:min}. Convergence results for the approximation are supplied in Section \ref{sec:theo} and numerical tests in Section \ref{sec:num}. In Section \ref{sec:vect}, we transfer the concept to a vectorial setting, where the gradient $\nabla U$ is approximated from vectorial data on $\Omega_R$ and $\Gamma_r$. This is of interest, e.g., for the crustal magnetic field where the actual sought-after quantity is the vectorial magnetic field $b=\nabla U$.

\section{General Setting}\label{sec:sett}

As mentioned in the introduction, $\{Y_{n,k}\}_{n=0,1,\ldots;k=1,\ldots,2n+1}$ denotes a set of orthonormal spherical harmonics of degree $n$ and order $k$. Aside from the space  $L^2(\Omega_r)$ of square-integrable functions on $\Omega_r$, we also need the Sobolev space $\mathcal{H}_s(\Omega_r)$, $s\geq 0$. It is defined by
\begin{align}\label{eqn:sobspace}
\mathcal{H}_s(\Omega_r)=\left\{F\in L^2(\Omega_r):\|F\|_{\mathcal{H}_s(\Omega_r)}^2=\sum_{n=0}^\infty\sum_{k=1}^{2n+1}\Big(n+\textnormal{\footnotesize $\frac{1}{2}$}\Big)^{2s} \big|F_r^\wedge(n,k)\big|^2<\infty\right\},
\end{align}
where $F_r^\wedge(n,k)$ denotes the Fourier coefficient of degree $n$ and order $k$, i.e.,
\begin{align}
F_r^\wedge(n,k)=\int_{\Omega_r}F(y)\frac{1}{r}Y_{n,k}\left(\frac{y}{|y|}\right)d\omega(y).
\end{align}
A further notion that we need is 
\begin{align}\label{polnj1}
\textnormal{Pol}_{N}=\left\{K(x,y)=\sum_{n=0}^{N}\sum_{k=1}^{2n+1}K^\wedge(n) Y_{n,k}\left(\xi\right)Y_{n,k}\left(\eta\right):K^\wedge(n)\in\mathbb{R}\right\},
\end{align}
the space of all band-limited zonal kernels with maximal degree $N$ (as always, $\xi$, $\eta$ denote the unit vectors $\frac{x}{|x|}$ and $\frac{y}{|y|}$, respectively). The kernels $\Phi_N$ and $\tilde{\Psi}_N$ from \eqref{eqn:phin} and \eqref{eqn:psin} are members of such spaces. Zonal means that $K$ only depends on the scalar product $\xi\cdot\eta$, more precisely,
\begin{align}\label{eqn:Kzonal}
K(x,y)=\sum_{n=0}^{N}\frac{2n+1}{4\pi}K^\wedge(n) P_n\left(\xi\cdot\eta\right),
\end{align}
with $P_n$ being the Legendre polynomial of degree $n$ (the expressions \eqref{polnj1} and \eqref{eqn:Kzonal} are connected by the spherical addition theorem). Thus, instead of $K(\cdot,\cdot)$ acting on $\Omega_r\times\Omega_R$ or $\Omega_r\times\Omega_r$, it can also be regarded as a function $K(\cdot)$ acting on the interval $[-1,1]$. In both cases we just write $K$. 

\subsection{Downward Continuation}\label{subsec:dc}

We return to Equations \eqref{eqn:11}--\eqref{eqn:13} in order to derive the approximation \eqref{eqn:firstapprox}, reminding that we are interested in $U^+=U|_{\Omega_r}$ (or $U^+=U|_{\Gamma_r}$, respectively). We start by considering only the equations \eqref{eqn:11} and \eqref{eqn:12}, leading to the reconstruction of $U^+$ from knowledge of $F_1$ on $\Omega_R$, $r<R$. Opposed to this, the determination of $F_1$ from $U^+$ is known as upward continuation. The operator $T^{up}:L^2(\Omega_r)\to L^2(\Omega_R)$, given by
\begin{align}\label{eqn:TrR}
F_1(x)={T}^{up}[U^+](x)=\int_{\Omega_r}K^{up}(x,y) U^+(y)d\omega(y),\quad x\in\Omega_R,
\end{align}
with 
\begin{align}\label{eqn:KrR}
K^{up}(x,y)&=\sum_{n=0}^\infty\sum_{k=1}^{2n+1}\sigma_n\frac{1}{R}Y_{n,k}\left(\xi\right)\frac{1}{r}Y_{n,k}\left(\eta\right)
\end{align}
and $\sigma_n=\left(\frac{r}{R}\right)^n$, describes this process. The downward continuation operator $T^{down}$ and acts in the following way:  
\begin{align}\label{eqn:TRr}
U^+(x)={T}^{down}[F_1](x)=\int_{\Omega_R}K^{down}(x,y) U^+(y)d\omega(y),\quad x\in\Omega_r,
\end{align}
with 
\begin{align}\label{eqn:KRr}
K^{down}(x,y)&=\sum_{n=0}^\infty\sum_{k=1}^{2n+1}\frac{1}{\sigma_n}\frac{1}{r}Y_{n,k}\left(\xi\right)\frac{1}{R}Y_{n,k}\left(\eta\right).
\end{align}
One way to deal with the unboundedness of $T^{down}$ is a multiscale representation where $T^{down}$ is approximated by a sequence of bounded operators. Following the course of \cite{freeden99} and \cite{schneider98}, we assume $\Phi_N$ to be a scaling kernel of the form \eqref{eqn:phin} with truncation index $\lfloor\kappa N\rfloor$ and symbols $\Phi_N^\wedge(n)$ that satisfy
\begin{itemize}
\item[(a)] $\lim_{N\to\infty}\Phi_N^\wedge(n)=\frac{1}{\sigma_n}$, uniformly with respect to $n=0,1,\ldots$,
\item[(b)] $\sum_{n=0}^\infty \frac{2n+1}{4\pi} \big|\Phi_N^\wedge(n)\big|< \infty$,  for all $N=0,1,\ldots$.
\end{itemize}
The bounded scaling transform $T_N:L^2(\Omega_R)\to L^2(\Omega_r)$ is then defined via 
\begin{align}\label{eqn:TjRr}
{{T}_N[F_1](x)=\int_{\Omega_R}\Phi_N(x,y) F_1(y)d\omega(y),\quad x\in\Omega_r,}
\end{align}
and represents an approximation of $T^{down}[F_1]$. This operator can be refined further by use of the wavelet transform
\begin{align}\label{eqn:WjRr}
{W}_N[F_1](x)=\int_{\Omega_R}\Psi_N(x,y) F_1(y)d\omega(y),\quad x\in\Omega_r,
\end{align}
where the kernel $\Psi_N$ is of the form \eqref{eqn:psin} with symbols $\Psi_N^\wedge(n)=\Phi_{\lfloor\kappa N\rfloor}^\wedge(n)-\Phi_N^\wedge(n)$. An approximation of $T^{down}[F_1]$ at the higher scale $\lfloor\kappa N\rfloor$, for some fixed $\kappa>1$, is then given by 
\begin{align}\label{eqn:multirep1}
{T}_{\lfloor\kappa N\rfloor}[F_1](x)={T}_N[F_1](x)+W_N[F_1](x),\quad x\in\Omega_r.
\end{align}
It has to be noted that the kernel $\Psi_N$ and the wavelet transform $W_N$ lack a tilde (as opposed to representations \eqref{eqn:firstapprox} and \eqref{eqn:psin}). This indicates that we have not taken data $F_2$ in $\Gamma_r$ into account yet. Operators and kernels with a tilde mean that information is mapped from $\Gamma_r$ to $\Gamma_r$ while a lack of the tilde typically indicates the mapping of information from $\Omega_R$ to $\Omega_r$ (or $\Gamma_r$, respectively). 

\subsection{Combination of Downward Continuation and Local Data}\label{subsec:comb}

In order to incorporate data $F_2$ in $\Gamma_r$ by use of a wavelet transform, it is necessary to rewrite \eqref{eqn:WjRr}. Observing that $F_1$ and $F_2$ are only specific expressions of $U$ on the spheres $\Omega_R$ and $\Omega_r$, respectively, we find 
\begin{align}\label{eqn:Wjrr}
&\int_{\Omega_R}\Psi_N(x,y) F_1(y)d\omega(y)=\int_{\Omega_R}\Psi_N(x,y) U(y)d\omega(y)
\\&=\int_{\Omega_r}\tilde{\Psi}_N(x,y) U(y)d\omega(y)=\int_{\Omega_r}\tilde{\Psi}_N(x,y) F_2(y)d\omega(y),\quad x\in\Omega_r,\nonumber
\end{align}
where $\tilde{\Psi}_N$ is of the form \eqref{eqn:psin} with  $\tilde{\Psi}_N^\wedge(n)=\sigma_n{\Psi}_N^\wedge(n)=\Phi_{\lfloor\kappa N\rfloor}^\wedge(n)\sigma_n-\Phi_N^\wedge(n)\sigma_n$ (the factor $\sigma_n$ stems from the downward continuation that occurs in the second equality of \eqref{eqn:Wjrr}). We slightly modify the symbol $\tilde{\Psi}_N^\wedge(n)$ by use of the auxiliary symbol $\tilde{\Phi}_N^\wedge(n)$, so that it reads  
\begin{align}
{\tilde{\Psi}_N^\wedge(n)=\tilde{\Phi}_N^\wedge(n)-\Phi_N^\wedge(n)\sigma_n.}
\end{align}
This has the effect that now two parameters are available, namely $\Phi_N^\wedge(n)$, which reflects the behaviour of the operator $T_N$ responsible for the downward continuation, and $\tilde{\Phi}_N^\wedge(n)$, which offers a chance to control the localization of $\tilde{\Psi}_N$ and the behaviour of $\tilde{W}_N$ to a certain amount. The auxiliary symbol needs to satisfy
\begin{itemize}
\item[(a')] $\lim_{N\to\infty}\tilde{\Phi}_N^\wedge(n)=1$, uniformly with respect to $n=0,1,\ldots$,
\item[(b')] $\sum_{n=0}^\infty \frac{2n+1}{4\pi} \big|\tilde{\Phi}_N^\wedge(n)\big|< \infty$,  for all $N=0,1,\ldots$.
\end{itemize}
Remembering that $F_2$ is only available locally in $\Gamma_r$ and paying tribute to \eqref{eqn:Wjrr}, we define the wavelet transform 
\begin{align}\label{eqn:Wjloc}
{\tilde{W}_N[F_2](x)=\int_{\mathcal{C}_r(x,\rho)}\tilde{\Psi}_N(x,y) F_2(y)d\omega(y),\quad x\in\tilde{\Gamma}_r.}
\end{align}
$\mathcal{C}_r(x,\rho)$ denotes the spherical cap $\{y\in\Omega_r:1-\frac{x}{|x|}\cdot\frac{y}{|y|}<\rho\}$ with radius $\rho\in(0,2)$ and center $x\in\Omega_r$. The subset $\tilde{\Gamma}_r\subset\Gamma_r$ is chosen such that $\mathcal{C}_r(x,\rho)\subset\Gamma_r$ for every $x\in\tilde{\Gamma}_r$ and some $\rho\in(0,2)$ that is fixed in advance. The restriction to spherical caps is somewhat artificial and serves the sole purpose of simplifying the optimization of $\tilde{\Psi}_N$ (for the actual numerical evaluation of $U^+$ later on, we integrate over all of $\Gamma_r$ to make use of all available data). Summing up, the relations \eqref{eqn:TjRr}--\eqref{eqn:Wjloc} motivate
\begin{align}\label{eqn:firstapprox2}
{U_N={T}_N[F_1]+{\tilde{W}}_N[F_2]}
\end{align}
as an approximation of $U^+$ in $\tilde{\Gamma}_r$ (compare \eqref{eqn:firstapprox} in the introduction).

\section{The Minimizing Functional}\label{sec:min}

From now on we assume contaminated input data $F_1^{\eps_1}=F_1+\eps_1E_1$ and $F_2^{\varepsilon_2}=F_2+\varepsilon_2E_2$ with deterministic noise $E_1\in L^2(\Omega_R)$, $E_2\in L^2(\Omega_r)$, and $\eps_1,\eps_2>0$. The approximation \eqref{eqn:firstapprox2} of $U^+$ in $\tilde{\Gamma}_r$ is then modified by
\begin{align}\label{eqn:firstapprox3}
{U_N^\eps={T}_N[F_1^{\eps_1}]+{\tilde{W}}_N[F_2^{\eps_2}],}
\end{align}
where $\eps$ stands short for $(\eps_1,\eps_2)^T$. It is the aim of this paper to find kernels $\Phi_N$ and $\tilde{\Psi}_N$ (determined by the symbols $\Phi_N^\wedge(n)$ and $\tilde{\Psi}_N^\wedge(n)=\tilde{\Phi}_N^\wedge(n)-\Phi_N^\wedge(n)\sigma_n$, respectively) that keep the error $\|U^+-U_N^\eps\|_{L^2(\tilde{\Gamma}_r)}$ small and allow some adaptations to possible a-priori knowledge on $F_1^{\eps_1}$ and $F_2^{\eps_2}$ (such as noise level of the measurements or data density). We start with the estimate
\begin{align}
&\|U^+ - U_N^{\varepsilon}\|_{L^2(\tilde{\Gamma}_r)}\label{eqn:errorest2}
\\&\leq \|U^+ - {T}_{N}[F_1]-\tilde{W}_{N}[F_2]\|_{L^2(\tilde{\Gamma}_r)}+\|{T}_{N}[F_1-F_1^{\varepsilon_1}]+\tilde{W}_{N}[F_2-F_2^{\varepsilon_2}]\|_{L^2(\tilde{\Gamma}_r)}\nonumber
\\&\leq \|(1-{T}_{N}{T}^{up}-\tilde{W}_{N})[U^+]\|_{L^2(\tilde{\Gamma}_r)}+\varepsilon_1\|{T}_{N}[E_1]\|_{L^2(\tilde{\Gamma}_r)}+\varepsilon_2\|\tilde{W}_{N}[E_2]\|_{L^2(\tilde{\Gamma}_r)}.\nonumber
\end{align}
The first term on the right hand side can be split up further in the following way:
\begin{align}
& \|(1-{T}_{N}{T}^{up}-\tilde{W}_{N})[U^+]\|_{L^2(\tilde{\Gamma}_r)}\label{eqn:errorest22}
\\&\leq \frac{1}{2}\|(1-{T}_{N}{T}^{up})[U^+]\|_{L^2(\tilde{\Gamma}_r)}+ \frac{1}{2}\left\|\int_{\Omega_r}\tilde{\Psi}_N(\cdot,y)U^+(y)d\omega(y)\right\|_{L^2(\tilde{\Gamma}_r)}\nonumber
\\&\quad\, +\frac{1}{2}\left\|U^+(x)-\int_{\Omega_r}\tilde{\Phi}_N(\cdot,y)U^+(y)d\omega(y)\right\|_{L^2(\tilde{\Gamma}_r)}+ \left\|\int_{\Omega_r\setminus\mathcal{C}_r(\cdot,\rho)}\tilde{\Psi}_N(\cdot,y)U^+(y)d\omega(y)\right\|_{L^2(\tilde{\Gamma}_r)}.\nonumber
\end{align}
The last term on the right hand side of \eqref{eqn:errorest22} simply compensates the extension of the integration region in the two preceding terms from $\mathcal{C}_r(x,\rho)$ to all of $\Omega_r$. We continue with 
\begin{equation}\label{eqn:errorest23}
\begin{aligned}
& \|(1-{T}_{N}{T}^{up}-\tilde{W}_{N})[U^+]\|_{L^2(\tilde{\Gamma}_r)}
\\&\leq \frac{1}{2}\left\|\sum_{n=0}^\infty\sum_{k=1}^{2n+1}(1-{\Phi}_N^\wedge(n)\sigma_n)\big(U_r^+\big)^\wedge(n,k)\frac{1}{r}Y_{n,k}\right\|_{L^2(\Omega_r)}
\\&\quad+\frac{1}{2}\left\|\sum_{n=0}^\infty\sum_{k=1}^{2n+1}\tilde{\Psi}_{N}^\wedge(n)\big(U_r^+\big)^\wedge(n,k)\frac{1}{r}Y_{n,k}\right\|_{L^2(\Omega_r)}
\\&\quad+\frac{1}{2}\left\|\sum_{n=0}^\infty\sum_{k=1}^{2n+1}(1-\tilde{\Phi}_N^\wedge(n))\big(U_r^+\big)^\wedge(n,k)\frac{1}{r}Y_{n,k}\right\|_{L^2(\Omega_r)}
\\&\quad+\left\|\,\int_{\Omega_r\setminus\mathcal{C}_r(\rho,\cdot)}\tilde{\Psi}_{N}(\cdot,y) U^+(y)d\omega(y)\right\|_{L^2(\Omega_r)}
\\&\leq\sup_{n=0,1,\ldots}\frac{\big|1-\tilde{\Phi}_N^\wedge(n)\big|}{2\big(n+\frac{1}{2}\big)^s}\|U^+\|_{\mathcal{H}_s(\Omega_r)}+\sup_{n=0,1,\ldots}\frac{\big|1-{\Phi}_{N}^\wedge(n)\sigma_n\big|}{2\big(n+\frac{1}{2}\big)^s}\|U^+\|_{\mathcal{H}_s(\Omega_r)}
\\&\quad+\frac{1}{2}\sup_{n=0,1,\ldots}\big|\tilde{\Psi}_{N}^\wedge(n)\big|\|U^+\|_{L^2(\Omega_r)}+2\sqrt{2}\pi r^2\big\|\tilde{\Psi}_{N}\big\|_{L^2([-1,1-\rho])}\|U^+\|_{L^2(\Omega_r)}.
\end{aligned}
\end{equation}
For the last estimate on the right hand side, we observe that, due to the zonality of the kernels, $\sup_{x\in\Omega_r}\|\tilde{\Psi}_{N}(x,\cdot)\|_{L^2(\Omega_r\setminus\mathcal{C}_r(x,\rho))}$ coincides with $\sqrt{2\pi} r\|\tilde{\Psi}_{N}\|_{L^2([-1,1-\rho])}$, where
\begin{align}\label{eqn:1dint}
\big\|\tilde{\Psi}_{N}\big\|_{L^2([-1,1-\rho])}^2=\int_{-1}^{1-\rho}|\tilde{\Psi}_{N}(t)|^2dt.
\end{align}
Similar estimates can be obtained for the terms $\varepsilon_1\|{T}_{N}[E_1]\|_{L^2(\tilde{\Gamma}_r)}$ and $\varepsilon_2\|\tilde{W}_{N}[E_2]\|_{L^2(\tilde{\Gamma}_r)}$ in \eqref{eqn:errorest2}, so that we end up with an overall estimate
\begin{align}
&\|U^+ - U_N^{\varepsilon}\|_{L^2(\tilde{\Gamma}_r)}\label{eqn:errorestfinal}
\\&\leq\|U^+\|_{\mathcal{H}_s(\Omega_r)}\sup_{n=0,1,\ldots}\frac{\big|1-\tilde{\Phi}_N^\wedge(n)\big|}{2\big(n+\frac{1}{2}\big)^s}+\|U^+\|_{\mathcal{H}_s(\Omega_r)}\sup_{n=0,1,\ldots}\frac{\big|1-{\Phi}_{N}^\wedge(n)\sigma_n\big|}{2\big(n+\frac{1}{2}\big)^s}\nonumber
\\&\quad+\varepsilon_1\|E_1\|_{L^2(\Omega_R)}\sup_{n=0,1,\ldots}\big|\Phi_{N}^\wedge(n)\big|
+\Big(\eps_2\|E_2\|_{L^2(\Omega_r)}+\frac{1}{2}\|U^+\|_{L^2(\Omega_r)}\Big)\sup_{n=0,1,\ldots}\big|\tilde{\Psi}_{N}^\wedge(n)\big|\nonumber
\\&\quad+2\sqrt{2}\pi r^2\left(\eps_2\|E_2\|_{L^2(\Omega_r)}+\|U^+\|_{L^2(\Omega_r)}\right)\big\|\tilde{\Psi}_{N}\big\|_{L^2([-1,1-\rho])}.\nonumber
\end{align}
Eventually, finding 'good' kernels $\Phi_N$ and $\tilde{\Psi}_N$ reduces to finding symbols $\Phi_N^\wedge(n)$, $\tilde{\Phi}_N^\wedge(n)$ that keep the right hand side of \eqref{eqn:errorestfinal} small (note that $\tilde{\Psi}_{N}^\wedge(n)$ is given by $\tilde{\Phi}_N^\wedge(n)-\Phi_N^\wedge(n)\sigma_n$). We choose these symbols to be the minimizers of the functional 
\begin{equation}\label{eqn:mineq2}
{\begin{aligned}
\mathcal{F}(\Phi_N,\tilde{\Psi}_N)=&\sum_{n=0}^{\lfloor\kappa N\rfloor}\tilde{\alpha}_{N,n}\big|1-\tilde{\Phi}_{N}^\wedge(n)\big|^2+\sum_{n=0}^{N}\alpha_{N,n}\big|1-{\Phi}_{N}^\wedge(n)\sigma_n\big|^2
\\&+\beta_N\sum_{n=0}^{N}\big|\Phi_N^\wedge(n)\big|^2+8\pi^2 r^4\big\|\tilde{\Psi}_N\big\|^2_{L^2([-1,1-\rho])},
\end{aligned}}
\end{equation}
with $\Phi_N$ being a member of Pol$_{N}$ and $\tilde{\Psi}_N$ a member of Pol$_{\lfloor\kappa N\rfloor}$. The suprema in \eqref{eqn:errorestfinal} have been changed to square sums to simplify the determination of the minimizers. All pre-factors appearing in \eqref{eqn:errorestfinal} have been compensated into the parameters $\tilde{\alpha}_{N,n}$, ${\alpha}_{N,n}$, and $\beta_N$. They decide how much emphasis is set on the approximation property, how much on the behaviour of the downward continuation, and how much on the localization of the kernel $\tilde{\Psi}_N$. More precisely, the first term on the right hand side of \eqref{eqn:mineq2} reflects the overall approximation error (under the assumption that undisturbed global data is available on $\Omega_R$ as well as on $\Omega_r$), the second term only measures the error due to the downward continuation of undisturbed data on $\Omega_R$. The third and fourth term can be regarded as penalty terms reflecting the norm of the regularized downward 
continuation operator $T_N$ and the localization of the wavelet kernel (i.e., the error made by neglecting information outside the spherical cap $\mathcal{C}_r(x,\rho)$), respectively.

\section{Theoretical Results}\label{sec:theo}

Sections \ref{sec:sett} and \ref{sec:min} have motivated the choice of the functional $\mathcal{F}$ (cf. \eqref{eqn:mineq2}) and the approximation $U_N^\eps$ of $U^+$ (cf. \eqref{eqn:firstapprox3}). In this section, we want to study the approximation more rigorously with respect to its convergence. The general idea for the proof of the convergence stems from \cite{michel11} where the optimization of approximate identity kernels has been treated. We start with a lemma indicating the solution of the minimization of the functional $\mathcal{F}$.

\begin{lem}\label{prop:minsol}
Assume that all parameters $\tilde{\alpha}_{N,n}$, $\alpha_{N,n}$, and $\beta_N$ are positive. Then there exist unique minimizers $\Phi_N\in\textnormal{Pol}_{N}$ and $\tilde{\Psi}_N\in\textnormal{Pol}_{\lfloor\kappa N\rfloor}$ of the functional $\mathcal{F}$ in \eqref{eqn:mineq2} that are determined by the symbols $\phi=(\Phi_N^\wedge(1)\sigma_1,\ldots,\Phi_N^\wedge(N)\sigma_{N},\tilde{\Phi}_{N}^\wedge(1),\ldots,\tilde{\Phi}_{N}^\wedge(\lfloor\kappa N\rfloor))^T$ which solve the linear equations
\begin{align}\label{eqn:lineq1}
\mathbf{M}\phi=\alpha,
\end{align}
where 
\begin{align}
\mathbf{M}=\left(\begin{array}{c|c}\mathbf{D}_1+\mathbf{P}_1&-\mathbf{P}_2\\\hline-\mathbf{P}_3&\mathbf{D}_2+\mathbf{P}_4\end{array}\right),
\end{align}
and $\alpha=(\alpha_{N,1},\ldots,\alpha_{N,N},\tilde{\alpha}_{N,1},\ldots,\tilde{\alpha}_{N,\lfloor\kappa N\rfloor})^T$. The diagonal matrices $\mathbf{D}_1$, $\mathbf{D}_2$ are given by 
\begin{align}
\mathbf{D}_1=\textnormal{diag}\left(\frac{\beta_N}{\sigma_n^2}+\alpha_{N,n}\right)_{n=0,\ldots,N}, \qquad\mathbf{D}_2=\textnormal{diag}\big(\tilde{\alpha}_{N,n}\big)_{n=0,\ldots,\lfloor\kappa N\rfloor},
\end{align}
whereas $\mathbf{P}_1$,\ldots, $\mathbf{P}_4$ are submatrices of the Gram matrix $\big(P_{n,m}^\rho\big)_{n,m=0,\ldots, \lfloor\kappa N\rfloor}$. More precisely,
\begin{align}
\mathbf{P}_1=\big(P_{n,m}^\rho\big)_{n,m=0,\ldots,N},&\qquad \mathbf{P}_2=\big(P_{n,m}^\rho\big)_{n=0,\ldots,N\atop m=0,\ldots,\lfloor\kappa N\rfloor},
\\\mathbf{P}_3=\big(P_{n,m}^\rho\big)_{n=0,\ldots,\lfloor\kappa N\rfloor\atop m=0,\ldots,N},&\qquad\mathbf{P}_4=\big(P_{n,m}^\rho\big)_{n,m=0,\ldots,\lfloor\kappa N\rfloor}.
\end{align}
with
\begin{align}
P_{n,m}^\rho=\frac{(2n+1)(2m+1)}{2}\int_{-1}^{1-\rho}P_n(t)P_m(t)dt.
\end{align}
\end{lem}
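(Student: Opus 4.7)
The plan is to recast the minimization of $\mathcal{F}$ as a strictly convex finite-dimensional quadratic problem in the vector $\phi$, and then read off both the linear system and the existence/uniqueness of the minimizer from the positive definiteness of the associated Hessian. First I would parametrize: since every $\sigma_n>0$, the map $(\Phi_N,\tilde\Psi_N)\mapsto\phi$ given by $a_n=\Phi_N^\wedge(n)\sigma_n$ (for $n=0,\ldots,N$) and $b_n=\tilde\Phi_N^\wedge(n)$ (for $n=0,\ldots,\lfloor\kappa N\rfloor$) is a bijection between $\text{Pol}_N\times\text{Pol}_{\lfloor\kappa N\rfloor}$ (parametrized through $\tilde\Phi_N^\wedge$ via the boxed coupling $\tilde\Psi_N^\wedge(n)=\tilde\Phi_N^\wedge(n)-\Phi_N^\wedge(n)\sigma_n$) and $\mathbb{R}^{N+1+\lfloor\kappa N\rfloor+1}$. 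Under this parametrization the first three terms of $\mathcal{F}$ become the purely diagonal quadratic forms $\sum\tilde\alpha_{N,n}(1-b_n)^2$, $\sum\alpha_{N,n}(1-a_n)^2$ and $\beta_N\sum a_n^2/\sigma_n^2$; they contribute the blocks $\mathbf{D}_1$ and $\mathbf{D}_2$ to the Hessian and the full vector $\alpha$ to the linear part.

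Second, I would unfold the localization term. Using the addition theorem, the zonal function $\tilde\Psi_N(\cdot)$ on $[-1,1]$ equals $\tfrac{1}{r^{2}}\sum_{n=0}^{\lfloor\kappa N\rfloor}\tfrac{2n+1}{4\pi}\tilde\Psi_N^\wedge(n)P_n$, so that a direct computation of $\|\cdot\|^2_{L^2([-1,1-\rho])}$, multiplied by the prefactor $8\pi^2 r^4$, collapses the constants to precisely $P_{n,m}^\rho$ and yields
\begin{align*}
8\pi^{2}r^{4}\|\tilde\Psi_N\|_{L^2([-1,1-\rho])}^{2}
=\sum_{n,m=0}^{\lfloor\kappa N\rfloor}\tilde\Psi_N^\wedge(n)\,P_{n,m}^\rho\,\tilde\Psi_N^\wedge(m).
\end{align*}
Substituting $\tilde\Psi_N^\wedge(n)=b_n-a_n$ (with $a_n=0$ for $n>N$) and splitting the double sum at the index $N$ produces the block expression $a^{T}\mathbf{P}_1 a-a^{T}\mathbf{P}_2 b-b^{T}\mathbf{P}_3 a+b^{T}\mathbf{P}_4 b$, which supplies exactly the off-diagonal blocks $-\mathbf{P}_2$, $-\mathbf{P}_3$ of $\mathbf{M}$ together with the Gram additions to the diagonal blocks.

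Third, I would differentiate the rewritten $\mathcal{F}$ with respect to each $a_k$ and each $b_k$, set the gradient to zero, and use the symmetry $P_{n,m}^\rho=P_{m,n}^\rho$ to check that the resulting system of $N+1+\lfloor\kappa N\rfloor+1$ linear equations is exactly \eqref{eqn:lineq1}. The remaining point is positive definiteness of $\mathbf{M}$: the block matrix $\bigl(\begin{smallmatrix}\mathbf{P}_1&-\mathbf{P}_2\\-\mathbf{P}_3&\mathbf{P}_4\end{smallmatrix}\bigr)$ is positive semi-definite because, by the identity derived in the second paragraph, its associated quadratic form in $\phi=(a,b)^{T}$ equals the nonnegative quantity $8\pi^2 r^4\|\tilde\Psi_N\|_{L^2([-1,1-\rho])}^{2}$; and the diagonal block $\text{diag}(\mathbf{D}_1,\mathbf{D}_2)$ is strictly positive definite because the hypothesis $\tilde\alpha_{N,n},\alpha_{N,n},\beta_N>0$ forces every diagonal entry $\tilde\alpha_{N,n}$ and $\alpha_{N,n}+\beta_N/\sigma_n^{2}$ to be strictly positive. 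Hence $\mathbf{M}$ is symmetric and positive definite, $\mathcal{F}$ is strictly convex and coercive in $\phi$, and the unique critical point characterized by $\mathbf{M}\phi=\alpha$ is the unique global minimizer.

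The main obstacle is purely combinatorial: the second step requires careful bookkeeping of the symmetric cross-terms in $(b-a)^{T}P(b-a)$ when the two variable blocks have different sizes, so that the halved Hessian produces off-diagonal contributions of $-\mathbf{P}_2$ and $-\mathbf{P}_3$ rather than $\pm 2\mathbf{P}_i$, and so that the $(1,1)$-block picks up the full $\mathbf{P}_1$ from the interior of the sum over $n,m\le N$. Once this identification is done correctly, the convexity argument and the passage from the first-order optimality condition to existence and uniqueness of the minimizer are entirely standard.
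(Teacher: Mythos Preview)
Your proposal is correct and follows essentially the same route as the paper: expand the localization term $8\pi^2 r^4\|\tilde\Psi_N\|^2_{L^2([-1,1-\rho])}$ via the addition theorem into the bilinear form in $P_{n,m}^\rho$, differentiate the resulting quadratic in $(\Phi_N^\wedge(n)\sigma_n,\tilde\Phi_N^\wedge(n))$, and conclude by positive definiteness of $\mathbf{M}$. Your argument for positive definiteness is in fact slightly sharper than the paper's: you correctly observe that the block $\bigl(\begin{smallmatrix}\mathbf{P}_1&-\mathbf{P}_2\\-\mathbf{P}_3&\mathbf{P}_4\end{smallmatrix}\bigr)$ is only positive \emph{semi}-definite (its quadratic form vanishes whenever $a_n=b_n$ for $n\le N$ and $b_n=0$ for $n>N$), so strict definiteness genuinely relies on the positive diagonal block $\mathrm{diag}(\mathbf{D}_1,\mathbf{D}_2)$, whereas the paper's phrasing leaves this point implicit.
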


\begin{proof}
First we observe that the representation \eqref{eqn:psin} of the kernel $\tilde{\Psi}_N$ and its zonality together with the addition theorem for spherical harmonics imply 
\small
\begin{align}\label{eqn:psinorm}
&8\pi^2 r^4\left\|\tilde{\Psi}_N\right\|^2_{L^2([-1,1-\rho])}
\\&=\frac{8\pi^2 r^4}{r^4}\int_{-1}^{1-\rho}\Bigg|\sum_{n=0}^{\lfloor\kappa N\rfloor}\frac{2n+1}{4\pi}\left(\tilde{\Phi}_{N}^\wedge(n)-\Phi_N^\wedge(n)\sigma_n\right)P_n(t)\Bigg|^2dt\nonumber
\\&=\sum_{n=0}^{\lfloor\kappa N\rfloor}\sum_{m=0}^{\lfloor\kappa N\rfloor}\frac{(2n+1)(2m+1)}{2}\left(\tilde{\Phi}_{N}^\wedge(m)-\Phi_N^\wedge(m)\sigma_m\right)\left(\tilde{\Phi}_{N}^\wedge(n)-\Phi_N^\wedge(n)\sigma_n\right)\int_{-1}^{1-\rho}\!\!\!\!\!\!\!P_n(t)P_m(t)dt\nonumber
\\&=\sum_{n=0}^{\lfloor\kappa N\rfloor}\sum_{m=0}^{\lfloor\kappa N\rfloor}\left(\tilde{\Phi}_{N}^\wedge(m)-\Phi_N^\wedge(m)\sigma_m\right)\left(\tilde{\Phi}_{N}^\wedge(n)-\Phi_N^\wedge(n)\sigma_n\right)P_{n,m}^\rho.\nonumber
\end{align}
\normalsize
Inserting \eqref{eqn:psinorm} into \eqref{eqn:mineq2} and then differentiating the whole expression with respect to $\Phi_N^\wedge(n)\sigma_n$ and $\tilde{\Phi}_{N}^\wedge(n)$ leads to 
\begin{align}
-2\alpha_{N,n}(1-\Phi_N^\wedge(n)\sigma_n)+2\beta_N\frac{\Phi_N^\wedge(n)\sigma_n}{\sigma_n^2}-2\sum_{m=0}^{\lfloor\kappa N\rfloor}\left(\tilde{\Phi}_{N}^\wedge(m)-\Phi_N^\wedge(m)\sigma_m\right)P_{n,m}^\rho,
\end{align} 
for $n=0,\ldots,N$, and
\begin{align}
-2\tilde{\alpha}_{N,n}(1-\tilde{\Phi}_{N}^\wedge(n))+2\sum_{m=0}^{\lfloor\kappa N\rfloor}\left(\tilde{\Phi}_{N}^\wedge(m)-\Phi_N^\wedge(m)\sigma_m\right)P_{n,m}^\rho,
\end{align}
for $n=0,\ldots,\lfloor\kappa N\rfloor$, respectively. Setting the two expressions above equal to zero, a proper reordering leads to the linear equation \eqref{eqn:lineq1}. At last, we observe that the matrix $(P_{n,m}^\rho)_{n,m=0,\ldots,\lfloor\kappa N\rfloor}$ is positive definite (since it represents a Gram matrix of linearly independent functions) and that all appearing diagonal matrices are positive definite due to positive matrix entries. Thus, the matrix $\mathbf{M}$ is positive definite and the linear system \eqref{eqn:lineq1} is uniquely solvable and leads to a minimum of \eqref{eqn:mineq2}.
\end{proof}

We continue with the statement of convergence for $U_N^\varepsilon$ (cf. Theorem \ref{thm:convres}). For the proof we need a localization result for Shannon-type kernels, more precisely, a (spherical) variation of the Riemann localization property. We borrow this result as a particular case from \cite{yuguang13}:

\begin{prop}\label{prop:loc1} 
If $F\in\mathcal{H}_s(\Omega_r)$, $s\geq 2$, it holds that
\begin{align}\label{eqn:locprop}
\lim_{N\to\infty}\left\|\int_{\Omega_r\setminus\mathcal{C}_r(\rho,\cdot)}\tilde{\Psi}^{Sh}_N(\cdot,y)F(y)d\omega(y)\right\|_{L^2 (\Omega_r)}=0,
\end{align}
where $\tilde{\Psi}^{Sh}_N$ is the Shannon-type kernel with symbols $\big(\tilde{\Psi}_N^{Sh}\big)^\wedge(n)=1$, if $N +1\leq n\leq \lfloor\kappa N\rfloor$, and $\big(\tilde{\Psi}_N^{Sh}\big)^\wedge(n)=0$, else.
\end{prop}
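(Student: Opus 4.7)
My plan is to diagonalize the operator $F \mapsto I_N[F]$, where $I_N[F](x)$ denotes the left-hand side of \eqref{eqn:locprop}, by means of the Funk--Hecke formula, thereby reducing the claim to a uniform estimate on a Fourier multiplier. Since the kernel $\tilde{\Psi}_N^{Sh}(x,y) = \frac{1}{r^2} K_N(\xi \cdot \eta)$, with $K_N(t) = \sum_{n=N+1}^{M} \frac{2n+1}{4\pi} P_n(t)$ and $M := \lfloor \kappa N \rfloor$, is zonal, and since the integration region $\{y \in \Omega_r : \xi \cdot \eta \leq 1 - \rho\}$ is rotationally symmetric about $\xi$, Funk--Hecke yields
\begin{align*}
I_N[F](x) = \sum_{m=0}^{\infty} \sum_{k=1}^{2m+1} \Lambda_N(m) \, F_r^\wedge(m,k) \, \tfrac{1}{r} Y_{m,k}(\xi), \quad \Lambda_N(m) = 2\pi \int_{-1}^{1-\rho} K_N(t) P_m(t) \, dt.
\end{align*}
Parseval then reduces \eqref{eqn:locprop} to showing $\sum_{m} |\Lambda_N(m)|^2 \sum_{k} |F_r^\wedge(m,k)|^2 \to 0$ as $N \to \infty$.

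Next, I would use the classical identity $(2n+1) P_n(t) = P'_{n+1}(t) - P'_{n-1}(t)$ to collapse the sum defining $K_N$ by telescoping,
\begin{align*}
K_N(t) = \frac{1}{4\pi} \bigl( P'_M(t) + P'_{M+1}(t) - P'_N(t) - P'_{N+1}(t) \bigr).
\end{align*}
Splitting $\int_{-1}^{1-\rho} = \int_{-1}^{1} - \int_{1-\rho}^{1}$ and recognising the full-sphere integral as the Funk--Hecke eigenvalue $\mathbf{1}_{[N+1,M]}(m)$ of the shell projection, one obtains $\Lambda_N(m) = \mathbf{1}_{[N+1,M]}(m) - B_N(m)$ with $B_N(m) := 2\pi \int_{1-\rho}^{1} K_N(t) P_m(t) \, dt$. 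The indicator contribution, summed against $\sum_k |F_r^\wedge(m,k)|^2$ via Parseval, is bounded by $(N+\tfrac{1}{2})^{-2s} \|F\|_{\mathcal{H}_s(\Omega_r)}^2$ and thus vanishes. What remains is to control $B_N(m)$ with appropriate decay in $N$ and tolerable polynomial growth in $m$.

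For $B_N(m)$ I would integrate by parts on $[1-\rho,1]$. Since $P_n(1) = 1$ for every $n$, the boundary contributions at $t = 1$ cancel identically, and only a boundary value at $t = 1-\rho$ of the form $(P_M + P_{M+1} - P_N - P_{N+1})(1-\rho) \, P_m(1-\rho)$ together with the integral of $(P_M + P_{M+1} - P_N - P_{N+1})(t) \, P'_m(t)$ survive. Because $1 - \rho$ lies strictly inside $(-1,1)$, the Darboux/Hilb asymptotic $|P_n(1-\rho)| \leq C n^{-1/2}$ applies and renders the boundary term $O(N^{-1/2})$. A second integration by parts on the remaining integral, which is where the Sobolev threshold $s \geq 2$ enters (derivatives of $P_m$ contribute polynomial growth in $m$ that must be absorbed by the weight $(m+\tfrac{1}{2})^{-2s}$), yields a bound of the form $|B_N(m)| \leq C(m+\tfrac{1}{2})^s N^{-1/2}$, so that Parseval together with $F \in \mathcal{H}_s(\Omega_r)$ closes the argument. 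The main obstacle is obtaining this estimate uniformly in $m$, and in particular for $m$ inside or adjacent to the Shannon passband $[N+1,M]$: the naive bound $\|K_N\|_{L^2([1-\rho,1])} = O(N)$ is far too lossy, and genuine decay requires exploiting the cancellation between $P_M, P_{M+1}, P_N, P_{N+1}$ and the near-orthogonality of $\{P_n\}$ on the restricted interval $[1-\rho, 1]$. This is precisely the subtlety that makes the spherical Riemann localization property more demanding than its one-dimensional counterpart and accounts for the elevated smoothness assumption $s \geq 2$.
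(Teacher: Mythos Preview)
The paper does not actually prove this proposition: it is quoted as a special case of a result from \cite{yuguang13} (Wang--Sloan--Womersley), so there is no in-paper argument to compare against. Your outline is in fact very close to how such Riemann-localization estimates are established, and the reduction via Funk--Hecke to a multiplier bound, together with the telescoping identity $K_N=\frac{1}{4\pi}(P'_{M+1}+P'_M-P'_{N+1}-P'_N)$ and the split $\Lambda_N(m)=\mathbf{1}_{[N+1,M]}(m)-B_N(m)$, is correct and is essentially the standard route.

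The one place where your write-up is incomplete is the final multiplier estimate, which you flag as ``the main obstacle'' but which in fact closes with a single integration by parts rather than two. After one integration by parts on $[1-\rho,1]$ you have
\[
2B_N(m)=-\big(P_{M+1}+P_M-P_{N+1}-P_N\big)(1-\rho)\,P_m(1-\rho)\;-\;\int_{1-\rho}^{1}\big(P_{M+1}+P_M-P_{N+1}-P_N\big)(t)\,P'_m(t)\,dt,
\]
the boundary at $t=1$ cancelling as you note. The boundary term is $O(N^{-1/2})$ by the Hilb/Darboux bound $|P_n(1-\rho)|\le C n^{-1/2}$ together with $|P_m(1-\rho)|\le 1$. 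For the remaining integral, the crude estimates $\|P'_m\|_{L^\infty([-1,1])}=P'_m(1)=\tfrac{m(m+1)}{2}$ and $\|P_n\|_{L^1([1-\rho,1])}\le \sqrt{\rho}\,\|P_n\|_{L^2([-1,1])}=\sqrt{\rho}\,\sqrt{2/(2n+1)}$ give a contribution of order $m^2 N^{-1/2}$. Hence $|B_N(m)|\le C(1+m^2)N^{-1/2}$ uniformly in $m$, and Parseval yields
\[
\sum_{m\ge0}|B_N(m)|^2\sum_{k}|F_r^\wedge(m,k)|^2\;\le\;C\,N^{-1}\,\|F\|_{\mathcal{H}_2(\Omega_r)}^2\;\longrightarrow\;0.
\]
This is exactly where the threshold $s\ge 2$ enters; no ``second integration by parts'' is needed, and your worry about uniformity for $m$ near the passband $[N+1,M]$ is already absorbed by the factor $(1+m^2)$. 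So the proposal is sound---you simply stopped one line short of the estimate that finishes it.
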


\begin{thm}\label{thm:convres}
Assume that the parameters $\alpha_{N,n}$, $\tilde{\alpha}_{N,n}$, and $\beta_N$ are positive and suppose that, for some fixed $\delta>0$ and $\kappa>1$,
\begin{align}
\inf_{n=0,\ldots, \lfloor\kappa N\rfloor}\frac{\beta_N\frac{1-\left(\frac{R}{r}\right)^{2(N+1)}}{1-\left(\frac{R}{r}\right)^2}+( \lfloor\kappa N\rfloor+1)^2}{{\alpha}_{N,n}}&=\mathcal{O}\left(N^{-2(1+\delta)}\right),\quad \textit{ for }N\to\infty.\label{eqn:ass1}
\end{align}
The same relation shall hold true for $\tilde{\alpha}_{N,n}$. Additionally, let every $N$ be associated with an $\varepsilon_1=\varepsilon_1(N)>0$ and an $\varepsilon_2=\varepsilon_2(N)>0$ such that 
\begin{align}
\lim_{N\to\infty}\varepsilon_1 \left(\frac{R}{r}\right)^{N}=\lim_{N\to\infty}\varepsilon_2 \,{N}=0.\label{eqn:propx}
\end{align}
The functions $F_1:\Omega_R\to\mathbb{R}$ and $F_2:\Gamma_r\to\mathbb{R}$, $r<R$, are supposed to be such that a unique solution $U$ of \eqref{eqn:11}--\eqref{eqn:13} exists and that the restriction $U^+$ is of class $\mathcal{H}_s(\Omega_r)$, for some fixed $s\geq 2$. The erroneous input data is given by $F_1^{\eps_1}=F_1+\eps_1 E_1$ and $F_2^{\eps_2}=F_2+\eps_2 E_2$, with $E_1\in L^2(\Omega_R)$ and $E_2\in L^2(\Omega_r)$. If the kernels $\Phi_N\in\textnormal{Pol}_{N}$ and kernel $\tilde{\Psi}_N\in\textnormal{Pol}_{ \lfloor\kappa N\rfloor}$ are the minimizers of the functional $\mathcal{F}$ from \eqref{eqn:mineq2} and if $U_N^\eps$ is given as in \eqref{eqn:firstapprox3}, then
\begin{align}
\lim_{N\to\infty}\left\|U^+-U_ {N}^{\varepsilon}\right\|_{L^2(\tilde{\Gamma}_r)}=0.\label{eqn:convres}
\end{align}
\end{thm}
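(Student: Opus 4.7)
The plan is to bound $\|U^+-U^\varepsilon_N\|_{L^2(\tilde\Gamma_r)}$ term by term via the decomposition \eqref{eqn:errorestfinal} (with one sharpening indicated below) and to drive every summand to zero by combining the minimality of $(\Phi_N,\tilde\Psi_N)$ against a carefully chosen test kernel with the localization statement of Proposition \ref{prop:loc1}.

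First I would evaluate $\mathcal F$ at the Shannon-type test symbols $\Phi_N^{Sh,\wedge}(n)=\sigma_n^{-1}$ for $n\le N$ and $\tilde\Phi_N^{Sh,\wedge}(n)=1$ for $n\le\lfloor\kappa N\rfloor$ (so that $\tilde\Psi_N^{Sh}$ is precisely the kernel appearing in Proposition \ref{prop:loc1}). The first two square sums in \eqref{eqn:mineq2} then vanish, the third reduces via a geometric series to $\beta_N\,\frac{1-(R/r)^{2(N+1)}}{1-(R/r)^2}$, and Parseval for zonal kernels on $[-1,1]$ bounds the fourth by $r^4\sum_{n=N+1}^{\lfloor\kappa N\rfloor}(2n+1)\le r^4(\lfloor\kappa N\rfloor+1)^2$. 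Minimality of $(\Phi_N,\tilde\Psi_N)$ thus delivers
\[\mathcal F(\Phi_N,\tilde\Psi_N)\le C\,X_N,\qquad X_N:=\beta_N\tfrac{1-(R/r)^{2(N+1)}}{1-(R/r)^2}+(\lfloor\kappa N\rfloor+1)^2,\]
and, since every summand of $\mathcal F$ is nonnegative, this immediately yields the coefficient-wise bounds $|1-\tilde\Phi_N^\wedge(n)|^2\le CX_N/\tilde\alpha_{N,n}$, $|1-\Phi_N^\wedge(n)\sigma_n|^2\le CX_N/\alpha_{N,n}$, $|\Phi_N^\wedge(n)|^2\le CX_N/\beta_N$, as well as $\|\tilde\Psi_N\|^2_{L^2([-1,1-\rho])}\le CX_N/(8\pi^2r^4)$.

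Plugging these into \eqref{eqn:errorestfinal} is then essentially book-keeping. Hypothesis \eqref{eqn:ass1} forces $\sup_n|1-\tilde\Phi_N^\wedge(n)|/(n+\tfrac12)^s$ and $\sup_{n\le N}|1-\Phi_N^\wedge(n)\sigma_n|/(n+\tfrac12)^s$ to be of order $N^{-(1+\delta)}$ on the active range, while for $n$ above the truncation the numerator is at most $1$ and the denominator is at least $(\lfloor\kappa N\rfloor+3/2)^s\ge N^2$. The estimate $|\Phi_N^\wedge(n)|\le(1+o(1))\sigma_n^{-1}\le C(R/r)^N$ combined with \eqref{eqn:propx} makes the noise contribution $\varepsilon_1\sup_n|\Phi_N^\wedge(n)|$ vanish. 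The term containing $\sup_n|\tilde\Psi_N^\wedge(n)|\,\|U^+\|_{L^2}$ has to be sharpened, because on $N<n\le\lfloor\kappa N\rfloor$ the symbol $\tilde\Psi_N^\wedge=\tilde\Phi_N^\wedge$ tends to $1$; one revisits the Parseval line of \eqref{eqn:errorest23} and bounds it instead by $\sup_n|\tilde\Psi_N^\wedge(n)|/(n+\tfrac12)^s\cdot\|U^+\|_{\mathcal H_s}$, which is $O(N^{-\min(1+\delta,s)})$ thanks to $s\ge 2$.

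The main obstacle is the localization term, since the crude estimate only gives $\|\tilde\Psi_N\|^2_{L^2([-1,1-\rho])}=O(X_N)$, which does not tend to zero. To circumvent this one returns to the untransformed quantity $\|\int_{\Omega_r\setminus\mathcal C_r(\cdot,\rho)}\tilde\Psi_N(\cdot,y)U^+(y)\,d\omega(y)\|_{L^2}$ and splits $\tilde\Psi_N=\tilde\Psi_N^{Sh}+G_N$ with $G_N:=\tilde\Psi_N-\tilde\Psi_N^{Sh}$. Proposition \ref{prop:loc1} eliminates the Shannon contribution (this is where $s\ge 2$ is used in earnest). For the remainder the coefficient bounds above yield $|G_N^\wedge(n)|=O(N^{-(1+\delta)})$ uniformly in $n\le\lfloor\kappa N\rfloor$, whence $\|G_N\|^2_{L^2([-1,1])}=\tfrac{1}{8\pi^2}\sum_n(2n+1)|G_N^\wedge(n)|^2=O(N^{-2\delta})\to 0$, and a single Cauchy--Schwarz step over the cap complement closes the argument. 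The twin noise term $\varepsilon_2\|\tilde W_N[E_2]\|_{L^2}$ is handled by the same decomposition, together with the absorption condition $\varepsilon_2 N\to 0$ that kills the factor $\|\tilde\Psi_N\|_{L^2([-1,1-\rho])}=O(N)$. Collecting all the pieces yields \eqref{eqn:convres}.
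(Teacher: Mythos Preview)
Your proposal is correct and follows essentially the same route as the paper: evaluate $\mathcal{F}$ at the Shannon test pair to obtain the cap $X_N$, read off the coefficient bounds $|1-\tilde\Phi_N^\wedge(n)|,\ |1-\Phi_N^\wedge(n)\sigma_n|=\mathcal O(N^{-1-\delta})$ from minimality together with \eqref{eqn:ass1}, and then control each summand of \eqref{eqn:errorestfinal}, returning to the pre-estimate expression for the localization term and splitting $\tilde\Psi_N=\tilde\Psi_N^{Sh}+G_N$ so that Proposition \ref{prop:loc1} handles the Shannon piece and $\|G_N\|_{L^2([-1,1])}^2=\mathcal O(N^{-2\delta})$ handles the remainder. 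The only cosmetic difference is your treatment of the $\sup_n|\tilde\Psi_N^\wedge(n)|\,\|U^+\|_{L^2}$ contribution: you sharpen via $\|U^+\|_{\mathcal H_s}$ and the decay $(n+\tfrac12)^{-s}$, whereas the paper goes back to Parseval and uses the split $n\le N$ versus $N<n\le\lfloor\kappa N\rfloor$ together with the $L^2$-tail of $U^+$ (and of $E_2$)---both arguments are equally short and valid.
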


\begin{proof}
As an auxiliary set of kernels, we define the Shannon-type kernels $\Phi_N^{Sh}$ and $\tilde{\Psi}_N^{Sh}$ via the symbols
\begin{align}
\big(\Phi_N^{Sh}\big)^\wedge(n)=\left\{\begin{array}{ll}\frac{1}{\sigma_n},&n\leq N,\\0,&\textnormal{else},\end{array}\right.\qquad \big(\tilde{\Phi}_{N}^{Sh}\big)^\wedge(n)=\left\{\begin{array}{ll}1,&n\leq \lfloor\kappa N\rfloor,\\0,&\textnormal{else},\end{array}\right.
\end{align}
and $\big(\tilde{\Psi}_N^{Sh}\big)^\wedge(n)=\big(\tilde{\Phi}_N^{Sh}\big)^\wedge(n)-\big(\Phi_N^{Sh}\big)^\wedge(n)\sigma_n$. $\Phi_N^{Sh}$ represents the kernel of the so-called truncated singular value decomposition for the downward continuation operator ${T}^{down}$. By using the addition theorem for spherical harmonics and properties of the Legendre polynomials, we obtain
\begin{align}
\mathcal{F}(\Phi_N^{Sh},\tilde{\Psi}_N^{Sh})&=\beta_N\sum_{n=0}^{N}\frac{1}{\sigma_n^2}+8\pi^2 r^4\big\|\tilde{\Psi}_N^{Sh}\big\|^2_{L^2([-1,1-\rho])}\label{eqn:shannonest}
\\&\leq \beta_N\frac{1-\left(\frac{R}{r}\right)^{2(N+1)}}{1-\left(\frac{R}{r}\right)^{2}}+8\pi^2\sum_{n=0}^{\lfloor\kappa N\rfloor}\left(\frac{2n+1}{4\pi}\right)^2\left\|P_n\right\|^2_{L^2([-1,1])}\nonumber
\\&= \beta_N\frac{1-\left(\frac{R}{r}\right)^{2(N+1)}}{1-\left(\frac{R}{r}\right)^{2}}+(\lfloor\kappa N\rfloor+1)^2.\nonumber
\end{align}
The kernels that minimize $\mathcal{F}$ are denoted by $\Phi_N$ and $\tilde{\Psi}_N$. In consequence,
\begin{align}
\tilde{\alpha}_{N,n}\big(1-\tilde{\Phi}_{N}^\wedge(n)\big)^2&\leq\mathcal{F}(\Phi_N,\tilde{\Psi}_N)\leq \mathcal{F}(\Phi_N^{Sh},\tilde{\Psi}_N^{Sh})
\\&\leq \beta_N\frac{1-\left(\frac{R}{r}\right)^{2(N+1)}}{1-\left(\frac{R}{r}\right)^{2}}+(\lfloor\kappa N\rfloor+1)^2,\nonumber
\end{align}
for all $n\leq \lfloor\kappa N\rfloor$. In combination with \eqref{eqn:ass1}, this leads to
\begin{align}\label{eqn:conphij}
\big|1-\tilde{\Phi}_{N}^\wedge(n)\big|= \mathcal{O}(N^{-1-\delta}),
\end{align}
uniformly for all $n\leq \lfloor\kappa N\rfloor$ and $N\to\infty$. The estimate $|1-\Phi_N^\wedge(n)\sigma_n|=\mathcal{O}(N^{-1-\delta})$ follows in the same manner and holds uniformly for all $n\leq N$ and $N\to\infty$. Finally, since $\tilde{\Phi}_{N}^\wedge(n)=0$ for $n> \lfloor\kappa N\rfloor$, we end up with
\begin{align}
\lim_{N\to\infty}\sup_{n=0,1,\ldots}\frac{|1-\tilde{\Phi}_{N}^\wedge(n)|}{(n+\frac{1}{2})^{s}}=0,
\end{align}
which shows that the first term on the right hand side of the error estimate \eqref{eqn:errorestfinal} vanishes. The second term can be treated analogously. 

Due to the uniform boundedness of $|\Phi_J^\wedge(n)\sigma_n|$, there exists some constant $C>0$ such that \eqref{eqn:propx} implies
\begin{align}
\lim_{N\to\infty}\eps_1\sup_{n=0,1,\ldots}\left|\Phi_N^\wedge(n)\right|\leq C\lim_{N\to\infty}\eps_1\left(\frac{R}{r}\right)^{N}=0.
\end{align}
so that the third term on the right hand side of \eqref{eqn:errorestfinal} vanishes as well. The fourth term does not vanish. However, taking a closer look at the derivation of this term, we see that it suffices to show that $\big\|\sum_{n=0}^\infty\sum_{k=1}^{2n+1}\tilde{\Psi}_{N}^\wedge(n)F_r^\wedge(n,k)\frac{1}{r}Y_{n,k}\big\|_{L^2(\Omega_r)}$ tends to zero (where $F$ stands for $U^+$ or $E_2$). Using the previous results and $|\tilde{\Psi}_N^\wedge(n)|\leq |1-{\Phi}_N^\wedge(n)\sigma_n|+|1-\tilde{\Phi}_N^\wedge(n)|$, it follows that
\begin{align}
\left\|\sum_{n=0}^\infty\sum_{k=1}^{2n+1}\tilde{\Psi}_{N}^\wedge(n)F_r^\wedge(n,k)\frac{1}{r}Y_{n,k}\right\|_{L^2(\Omega_r)}^2\leq \mathcal{O}(N^{-2\delta})+C\sum_{n=N+1}^{\lfloor \kappa N\rfloor}\sum_{k=1}^{2n+1}\left|F_r^\wedge(n,k)\right|^2,
\end{align}
for some constant $C>0$. Latter vanishes for $N\to\infty$ since $F$ (i.e., $U^+$ or $E_2$) is of class $L^2(\Omega_r)$. In order to handle the last term of the error estimate \eqref{eqn:errorestfinal}, it has to be shown that $\|\tilde{\Psi}_N\|_{L^2([-1,1-\rho])}$ tends to zero. This, again, is generally not true. But, taking a closer look at the derivation of the estimate indicates that it suffices to show that
\small\begin{align}\label{eqn:altterms}
\Bigg\|\,\,\int_{\Omega_r\setminus\mathcal{C}_r(\rho,\cdot)}\tilde{\Psi}_{N}(\cdot,y) U^+(y)d\omega(y)\Bigg\|_{L^2(\Omega_r)}, \qquad \eps_2\Bigg\|\,\,\int_{\Omega_r\setminus\mathcal{C}_r(\rho,\cdot)}\tilde{\Psi}_{N}(\cdot,y) E_2(y)d\omega(y)\Bigg\|_{L^2(\Omega_r)}
\end{align}\normalsize
vanish as $N\to\infty$. For the left expression we obtain
\begin{equation}\label{eqn:locest}
\begin{aligned}
&\Bigg\|\,\,\int_{\Omega_r\setminus\mathcal{C}_r(\rho,\cdot)}\tilde{\Psi}_{N}(\cdot,y) U^+(y)d\omega(y)\Bigg\|_{L^2(\Omega_r)}
\\&\leq\Bigg\|\,\,\int_{\Omega_r\setminus\mathcal{C}_r(\rho,\cdot)}\tilde{\Psi}^{Sh}_{N}(\cdot,y) U^+(y)d\omega(y)\Bigg\|_{L^2(\Omega_r)}
\\&\quad\,+\Bigg\|\,\,\int_{\Omega_r\setminus\mathcal{C}_r(\rho,\cdot)}\left(\tilde{\Psi}^{Sh}_{N}(\cdot,y)-\tilde{\Psi}_{N}(\cdot,y)\right) U^+(y)d\omega(y)\Bigg\|_{L^2(\Omega_r)}
\\&\leq\Bigg\|\,\,\int_{\Omega_r\setminus\mathcal{C}_r(\rho,\cdot)}\tilde{\Psi}^{Sh}_{N}(\cdot,y) U^+(y)d\omega(y)\Bigg\|_{L^2(\Omega_r)}
\\&\quad\,+2\pi r^2\|\tilde{\Psi}^{Sh}_{N}-\tilde{\Psi}_{N}\|_{L^2([-1,1-\rho])} \|U^+\|_{L^1(\Omega_r)},
\end{aligned}
\end{equation}
where Young's inequality has been used in the last row. Since $U^+\in \mathcal{H}_s(\Omega_r)$, $s\geq 2$, Proposition \ref{prop:loc1} implies that the first term on the right hand side of \eqref{eqn:locest} tends to zero as $N\to\infty$. The second term on the right hand side of \eqref{eqn:locest} can be treated as follows 
\begin{equation}
\begin{aligned}
&\big\|\tilde{\Psi}^{Sh}_{N}-\tilde{\Psi}_{N}\big\|_{L^2([-1,1-\rho])}^2
\\&\leq\sum_{n=0}^{\lfloor\kappa N\rfloor}\left(\frac{2n+1}{4\pi}\right)^2\left|\tilde{\Psi}_N^\wedge(n)-\left(\tilde{\Psi}^{Sh}_N\right)^\wedge(n)\right|^2\|P_n\|_{L^2([-1,1])}^2
\\&\leq\sum_{n=0}^{\lfloor\kappa N\rfloor}\frac{2n+1}{8\pi^2}\left|\left|\tilde{\Phi}_{N}^\wedge(n)-1\right|+\left|{\Phi}_N^\wedge(n)\sigma_n-1\right|\chi_{[0,N]}(n)\right|^2
\\&=\sum_{n=0}^{\lfloor\kappa N\rfloor}\frac{2n+1}{8\pi^2}\mathcal{O}\left(N^{-2(1+\delta)}\right)=\mathcal{O}\left(N^{-2\delta}\right).
\end{aligned}
\end{equation}
By $\chi_{[0,N]}(n)$ we mean the characteristic function, i.e., it is equal to 1 if $n=0,\ldots,N$, and equal to 0 otherwise. In conclusion, the left expression in \eqref{eqn:altterms} vanishes as $N\to\infty$. For the right expression in \eqref{eqn:altterms}, we obtain the desired result in a similar but easier manner. Finally, combining all steps of the proof, we have shown that the right hand side of the error estimate \eqref{eqn:errorestfinal} converges to zero, which yields \eqref{eqn:convres}. 
\end{proof}

\begin{rem}\label{rem:lockernels}
The condition $U^+\in \mathcal{H}_s(\Omega_r)$, $s\geq 2$, in Theorem \ref{thm:convres} can be relaxed to $U^+\in \mathcal{H}_s(\Omega_r)$, $s>0$, if the minimizing functional $\mathcal{F}$ in \eqref{eqn:mineq2} is substituted by
\begin{align}\label{eqn:mineq3}
\mathcal{F}_{Filter}(\Phi_N,\tilde{\Psi}_N)=&\sum_{n=0}^{\lfloor \kappa N\rfloor}\tilde{\alpha}_{N,n}\big|K_N^\wedge(n)-\tilde{\Phi}_{N}^\wedge(n)\big|^2+\sum_{n=0}^{N}\alpha_{N,n}\big|K_N^\wedge(n)-{\Phi}_{N}^\wedge(n)\sigma_n\big|^2
\\&+\beta_N\sum_{n=0}^{N}\big|\Phi_N^\wedge(n)\big|^2+8\pi^2 r^4\left\|\tilde{\Psi}_N\right\|^2_{L^2([-1,1-\rho])},\nonumber
\end{align}
where $K_N^\wedge(n)$ are the symbols of a filtered kernel (compare, e.g., \cite{yuguang132} and references therein for more information)
\begin{align}
\tilde{\Phi}_{N}^{Filter}(x,y)=\sum_{n=0}^{N}\sum_{k=1}^{2n+1}K_N^\wedge(n)Y_{n,k}\left(\frac{x}{|x|}\right)Y_{n,k}\left(\frac{y}{|y|}\right).
\end{align}
Then the proof of Theorem \ref{thm:convres} follows in a very similar manner as before, just that the minimizing kernels are not compared to the Shannon-type kernel but to the filtered kernel. Opposed to the Shannon-type kernel, an appropriately filtered kernel has the localization property $\lim_{N\to\infty}\|\tilde{\Phi}_{N}^{Filter}\|_{L^1([-1,1-\rho])}=0$  which makes the condition $s\geq 2$ on the smoothness of $U^+$ (required for Proposition \ref{prop:loc1}) obsolete. 
\end{rem}

To finish this section, we want to comment on the localization of the kernel $\tilde{\Psi}_N$. While we have used $\|\tilde{\Psi}_N\|_{L^2([-1,1-\rho])}$ as a measure for the localization inside a spherical cap $\mathcal{C}_r(\cdot,\rho)$ (values close to zero meaning a good localization, i.e., small leakage of information into $\Omega_r\setminus\mathcal{C}_r(\cdot,\rho)$), a more suitable quantity to consider would be
\begin{align}\label{eqn:altloc}
\frac{\big\|\tilde{\Psi}_N\big\|_{L^2([-1,1-\rho])}}{\big\|\tilde{\Psi}_N\big\|_{L^2([-1,1])}}.
\end{align}
This is essentially the expression that is minimized for the construction of Slepian functions (see, e.g., \cite{plattner13}, \cite{plattner13b}, \cite{simons06}, \cite{simons10}). However, using \eqref{eqn:altloc} as a penalty term in the functional $\mathcal{F}$ from \eqref{eqn:mineq2} would make it significantly harder to find its minimizers. Furthermore, it turns out that the kernel $\tilde{\Psi}_N$ that minimizes the original functional actually keeps the quantity \eqref{eqn:altloc} small as well (at least asymptotically in the sense that \eqref{eqn:altloc} vanishes for $N\to\infty$). The latter essentially originates from the property that $\tilde{\Psi}_N$ converges to a Shannon-type kernel (which has the desired property).

\begin{lem}\label{lem:loc}
Assume that the parameters $\alpha_{N,n}$, $\tilde{\alpha}_{N,n}$, and $\beta_N$ are positive and suppose that, for some fixed $\delta>0$ and $\kappa>1$,
\begin{align}
\inf_{n=0,\ldots, \lfloor\kappa N\rfloor}\frac{\beta_N\frac{1-\left(\frac{R}{r}\right)^{2(N+1)}}{1-\left(\frac{R}{r}\right)^2}+( \lfloor\kappa N\rfloor+1)^2}{{\alpha}_{N,n}}&=\mathcal{O}\left(N^{-2(1+\delta)}\right),\quad \textit{ for }N\to\infty.\label{eqn:ass11}
\end{align}
The same relation shall hold true for $\tilde{\alpha}_{N,n}$. If the scaling kernel $\Phi_N\in\textnormal{Pol}_{N}$ and the wavelet kernel $\tilde{\Psi}_N\in\textnormal{Pol}_{\lfloor\kappa N\rfloor}$ are the minimizers of the functional $\mathcal{F}$ from \eqref{eqn:mineq2}, then
\begin{align} \label{eqn:optloc2}
\lim_{N\to\infty}
\frac{\big\|\tilde{\Psi}_N\big\|^2_{L^2([-1,1-\rho])}}{\big\|\tilde{\Psi}_N\big\|^2_{L^2([-1,1])}}=0.
\end{align}
\end{lem}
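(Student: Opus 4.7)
The plan is to compare the minimizer $\tilde{\Psi}_N$ with the Shannon-type kernel $\tilde{\Psi}_N^{Sh}$ introduced in the proof of Theorem \ref{thm:convres} and reduce the claim to a localization estimate for $\tilde{\Psi}_N^{Sh}$ on $[-1,1-\rho]$ that follows from the Christoffel--Darboux identity for Legendre polynomials. Hypothesis \eqref{eqn:ass11} is identical to the hypothesis used in Theorem \ref{thm:convres}, so the uniform bounds $|1-\tilde{\Phi}_N^\wedge(n)|=\mathcal{O}(N^{-1-\delta})$ for $n\leq\lfloor\kappa N\rfloor$ and $|1-\Phi_N^\wedge(n)\sigma_n|=\mathcal{O}(N^{-1-\delta})$ for $n\leq N$ established there are available throughout the argument.

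For the denominator in \eqref{eqn:optloc2}, I would observe that $\Phi_N\in\textnormal{Pol}_{N}$ forces $\tilde{\Psi}_N^\wedge(n)=\tilde{\Phi}_N^\wedge(n)$ for $N<n\leq\lfloor\kappa N\rfloor$, so the uniform estimate above gives $|\tilde{\Psi}_N^\wedge(n)|\geq\tfrac{1}{2}$ on this range once $N$ is large. Parseval on $[-1,1]$ then provides the lower bound $\|\tilde{\Psi}_N\|_{L^2([-1,1])}^2\geq\sum_{n=N+1}^{\lfloor\kappa N\rfloor}\frac{2n+1}{8\pi^2}|\tilde{\Psi}_N^\wedge(n)|^2\geq c\,N^2$ for some $c>0$ depending only on $\kappa$.

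For the numerator I would split via the triangle inequality $\|\tilde{\Psi}_N\|_{L^2([-1,1-\rho])}\leq\|\tilde{\Psi}_N^{Sh}\|_{L^2([-1,1-\rho])}+\|\tilde{\Psi}_N-\tilde{\Psi}_N^{Sh}\|_{L^2([-1,1])}$, whose last term is $\mathcal{O}(N^{-\delta})$ by the Parseval argument already carried out in the proof of Theorem \ref{thm:convres}. The Shannon piece is controlled by writing $\tilde{\Psi}_N^{Sh}=K_{\lfloor\kappa N\rfloor}-K_N$, where the Christoffel--Darboux identity yields $K_M(t)=\frac{M+1}{4\pi}\cdot\frac{P_{M+1}(t)-P_M(t)}{t-1}$. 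Substituting the pointwise estimate $|P_n(t)|\leq C(1-t^2)^{-1/4}n^{-1/2}$ valid for $t\in(-1,1)$ produces $|K_M(t)|^2\leq\frac{C'\,M}{(1-t)^{5/2}(1+t)^{1/2}}$; since $1-t\geq\rho$ on $[-1,1-\rho]$ and $(1+t)^{-1/2}$ is integrable there, one concludes $\|K_M\|_{L^2([-1,1-\rho])}^2=\mathcal{O}(M)$ and hence $\|\tilde{\Psi}_N^{Sh}\|_{L^2([-1,1-\rho])}^2=\mathcal{O}(N)$.

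Combining the two estimates, the ratio in \eqref{eqn:optloc2} is at most $\mathcal{O}(N)/\Theta(N^2)=\mathcal{O}(N^{-1})\to 0$. The central technical obstacle is the $\mathcal{O}(N)$ bound on the Shannon piece: the crude bound $|P_n|\leq 1$ would give only $\mathcal{O}(N^2)$ and fail to beat the denominator, so one really must exploit the oscillation of Legendre polynomials through the $(1-t^2)^{-1/4}n^{-1/2}$ estimate together with Christoffel--Darboux. An alternative route would be to try to read the desired bound out of Proposition \ref{prop:loc1} by applying it to a suitable test function, but since that result concerns convolutions against $\mathcal{H}_s$-functions rather than the pure $L^2$-mass of the kernel, the direct Christoffel--Darboux computation appears to be the cleanest option.
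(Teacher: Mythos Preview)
Your argument is correct, but it follows a genuinely different route from the paper's proof. You bound numerator and denominator separately: the denominator from below by $\Theta(N^2)$ via Parseval, and the numerator by splitting off the Shannon kernel, expressing the Dirichlet kernel $K_M$ through Christoffel--Darboux, and invoking the Bernstein-type pointwise bound $|P_n(t)|\leq C(1-t^2)^{-1/4}n^{-1/2}$ to get $\|\tilde{\Psi}_N^{Sh}\|_{L^2([-1,1-\rho])}^2=\mathcal{O}(N)$. This yields the explicit rate $\mathcal{O}(N^{-1})$ for the ratio. The paper instead interprets $|\tilde{\Psi}_N(t)|^2/\|\tilde{\Psi}_N\|_{L^2([-1,1])}^2$ as a probability density on $[-1,1]$, bounds the ratio in \eqref{eqn:optloc2} by Cantelli's inequality in terms of the mean $E_N(t)$ and variance $V_N(t)$, and then uses only the three-term recurrence $tP_n=\frac{1}{2n+1}\big((n+1)P_{n+1}+nP_{n-1}\big)$ to show $E_N(t)\to 1$ and $V_N(t)\to 0$. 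Your approach is more direct and quantitative but relies on an analytic pointwise estimate for Legendre polynomials; the paper's moment argument is purely algebraic (recurrence only) and, more importantly, carries over essentially unchanged to the tensorial kernels of Section~\ref{sec:vect} (Lemma~\ref{lem:lockernel2}), where no clean Christoffel--Darboux identity is available.
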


\begin{proof}
The function $F_N(t)={|\tilde{\Psi}_N(t)|^2}/{\|\tilde{\Psi}_N\|^2_{L^2([-1,1])}}$ can be regarded as the density function of a random variable $t\in[-1,1]$. Thus, we can write
\begin{align}
\frac{\big\|\tilde{\Psi}_N\big\|^2_{L^2([-1,1-\rho])}}{\big\|\tilde{\Psi}_N\big\|^2_{L^2([-1,1])}}&=\int_{-1}^{1-\rho}F_N(t)dt=P_N(t< 1-\rho)=1-P_N\big(t-E_N(t)\geq 1-\rho-E_N(t)\big)\nonumber
\\&\leq \frac{V_N(t)}{V_N(t)+ (1-\rho-E_N(t))^2},
\end{align}
where we have used Cantelli's inequality for the last estimate. By $P_N(t< a)$ we mean the probability (with respect to the density function $F_N$) that $t$ lies in the interval $[-1,a)$ while $P_N(t\geq a)$ means the probability of $t$ being in the interval $[a,1]$. Furthermore, $E_N(t)$ denotes the expected value of $t$ and $V_N(t)=E(t^2)-(E(t))^2$ the variance of $t$. In other words, we are done if we can show that $\lim_{N\to\infty}E_N(t)=1$ and $\lim_{N\to \infty}V_N(t)=0$. 

First, we use the addition theorem for spherical harmonics and the recurrence relation  $tP_n(t)=\frac{1}{2n+1}\left((n+1)P_{n+1}(t)+nP_{n-1}(t)\right)$ to obtain  
\begin{align}\label{eqn:et1}
\int_{-1}^1 t|\tilde{\Psi}_N(t)|^2 dt&= \frac{1}{r^2} \sum_{n=0}^{\lfloor\kappa N\rfloor}\sum_{m=0}^{\lfloor\kappa N\rfloor}\frac{2n+1}{4\pi}\frac{2m+1}{4\pi}\tilde{\Psi}_N^\wedge(n)\tilde{\Psi}_N^\wedge(m)\int_{-1}^1tP_n(t)P_m(t)dt
\\&=\sum_{n=0}^{\lfloor\kappa N\rfloor}\frac{2n+1}{16\pi^2r^2}\tilde{\Psi}_N^\wedge(n)\left(n\tilde{\Psi}_N^\wedge(n-1)+(n+1)\tilde{\Psi}_N^\wedge(n+1)\right)\int_{-1}^1|P_n(t)|^2 dt\nonumber
\\&=\frac{1}{8\pi^2r^2}\sum_{n=0}^{\lfloor\kappa N\rfloor}n\tilde{\Psi}_N^\wedge(n)\tilde{\Psi}_N^\wedge(n-1)+(n+1)\tilde{\Psi}_N^\wedge(n)\tilde{\Psi}_N^\wedge(n+1).\nonumber
\end{align}
From \eqref{eqn:conphij} and the corresponding estimate for $\Phi_N^\wedge(n)\sigma_n$ we find $|\tilde{\Psi}_N^\wedge(n)|^2=\mathcal{O}(N^{-2(1+\delta)})$ for $n\leq N$, and $|1-\tilde{\Psi}_N^\wedge(n)|^2=|1-\tilde{\Phi}_{N}^\wedge(n)|^2=\mathcal{O}(N^{-2(1+\delta)})$, for $N+1\leq n\leq \lfloor\kappa N\rfloor$. As a consequence,  \eqref{eqn:et1} implies
\begin{align}\label{eqn:et11}
&\int_{-1}^1 t|\tilde{\Psi}_N(t)|^2 dt
\\&=\mathcal{O}(N^{-2\delta})+\sum_{n=N+1}^{\lfloor\kappa N\rfloor} \frac{2n+1}{8\pi^2r^2}|\tilde{\Psi}_N^\wedge(n)|^2\left(\frac{n}{2n+1}\frac{\tilde{\Psi}_N^\wedge(n-1)}{\tilde{\Psi}_N^\wedge(n)}+\frac{n+1}{2n+1}\frac{\tilde{\Psi}_N^\wedge(n+1)}{\tilde{\Psi}_N^\wedge(n)}\right),\nonumber
\end{align}
where $\sup_{N+1\leq n\leq\lfloor \kappa N\rfloor}\big|1-\big(\frac{n}{2n+1}{\tilde{\Psi}_N^\wedge(n-1)}/{\tilde{\Psi}_N^\wedge(n)}+\frac{n+1}{2n+1}{\tilde{\Psi}_N^\wedge(n+1)}/{\tilde{\Psi}_N^\wedge(n)}\big)\big|\leq\varepsilon_N$, for  some $\varepsilon_N>0$ that satisfies $\varepsilon_N\to0$ as $N\to\infty$. The term $\|\tilde{\Psi}_N\|_{L^2([-1,1])}^2$ can be expressed similarly by
\begin{align}\label{eqn:et2}
\|\tilde{\Psi}_N\|_{L^2([-1,1])}^2=\sum_{n=0}^{\lfloor\kappa N\rfloor} \frac{2n+1}{8\pi^2r^2}|\tilde{\Psi}_N^\wedge(n)|^2=\mathcal{O}(N^{-2\delta})+\sum_{n=N+1}^{\lfloor\kappa N\rfloor} \frac{2n+1}{8\pi^2r^2}|\tilde{\Psi}_N^\wedge(n)|^2.
\end{align}
Thus, combining \eqref{eqn:et1}--\eqref{eqn:et2}, we obtain
\begin{align}
\lim_{N\to\infty}E_N(t)=\lim_{N\to\infty}\int_{-1}^1 t|F_N(t)| dt=\lim_{N\to\infty}\frac{\int_{-1}^1 t|\tilde{\Psi}_N(t)|^2 dt}{\|\tilde{\Psi}_N\|_{L^2([-1,1])}^2}=1.
\end{align}
In a similar fashion it can be shown that $\lim_{N\to\infty}E_N(t^2)=1$, implying $\lim_{N\to\infty}V_N(t)=0$, which concludes the proof.
\end{proof}

\section{Numerical Test}\label{sec:num}

We generate a potential $U^+$ and data $F_1$, $F_2$ from uniformly distributed random Fourier coefficients up to spherical harmonic degree $100$. Analogously, the noise $E_1$, $E_2$ is produced by uniformly distributed random Fourier coefficients up to spherical harmonic degree $110$ and is then scaled such that $\|E_1\|_{L^2(\Omega_R)}=\|F_1\|_{L^2(\Omega_R)}$ and $\|E_2\|_{L^2(\Gamma_r)}=\|F_2\|_{L^2(\Gamma_r)}$. The mean Earth radius is given by $r=6371.2$km; for the satellite orbit we choose $R=7071.2$km (i.e., a satellite altitude of $700$km above the Earth's surface). We fix $N=100$ for the approximation $U_N^\varepsilon$ and choose $\kappa>0$ such that $\lfloor \kappa N\rfloor=130$. Then $U_N^\varepsilon$ is computed for the following different settings: 
\begin{itemize}
 \item[(a)] data $F_1^{\varepsilon_1}$ is available on an equiangular grid of $40804$ points on $\Omega_R$; data $F_2^{\varepsilon_2}$ is given on a Gauss-Legendre grid of $49062$ points in a spherical cap $\Gamma_r=\mathcal{C}_r(x_0,2\rho)$ around the North pole $x_0$. We choose $\tilde{\Gamma}_r=\mathcal{C}_r(x_0,\rho)$ and vary the radius $\rho\in\{0.5,0.02\}$, 
 \item[(b)] the parameters of the functional $\mathcal{F}$ are chosen among the cases $\beta_N\in\{10^{-3},\ldots,10^{4}\}$, $\tilde{\alpha}_{N,n}\in\{10^{1},\ldots, 10^{8}\}$, as well as ${\alpha}_{N,n}\in\{\tilde{\alpha}_{N,n},0.2\tilde{\alpha}_{N,n}\}$,
 \item[(c)] the noise level on $\Omega_R$ is varied among $\varepsilon_1\in\{0.001,0.01,0.05,0.1\}$; for the noise in $\Gamma_r$ we choose $\varepsilon_2\in\{\varepsilon_1,2\varepsilon_1,5\varepsilon_1\}$.
\end{itemize}
For the numerical integration required for the evaluation of $U_N^\varepsilon$ we use the scheme from \cite{drihea} on $\Omega_R$ and the scheme from \cite{womers12} in $\mathcal{C}_r(x,\rho)$. As a reference for the optimized kernels, an approximation $U_M^{\varepsilon,Sh}$ with Shannon-type kernels is computed as well: 
\begin{itemize}
 \item[(a)] we choose $\Phi_{M}^{Sh}$ with symbols $\big(\Phi_{M}^{Sh}\big)^\wedge(n)=\frac{1}{\sigma_n}$, if $n\leq M$, and $\big(\Phi_{M}^{Sh}\big)^\wedge(n)=0$, else, as well as $\tilde{\Psi}_{M}^{Sh}$ with symbols $\big(\tilde{\Psi}_{M}^{Sh}\big)^\wedge(n)=1$, if $M+1\leq n\leq 130$, and $\big(\tilde{\Psi}_{M}^{Sh}\big)^\wedge(n)=0$, else,
 \item[(b)] the cut-off degree $M$ is varied among $M\in\{0,2,4,\ldots,100\}$.
\end{itemize}
The Shannon-type kernels form a reasonable reference since the optimization of $\Phi_N$, $\tilde{\Psi}_N$ via $\mathcal{F}$ is done with respect to a Shannon-type situation. The relative errors $\|U_N^\varepsilon-U^+\|_{L^2(\tilde{\Gamma}_r)}/\|U^+\|_{L^2(\tilde{\Gamma}_r)}$ and $\|U_M^{\varepsilon,Sh}-U^+\|_{L^2(\tilde{\Gamma}_r)}/\|U^+\|_{L^2(\tilde{\Gamma}_r)}$ for the different situations are supplied in Tables \ref{tab:err1} and \ref{tab:err2}. 

As a comparison, we also compute approximations of $U^+$ using solely data $F_2$ in $\Gamma_r$ or solely data $F_1$ on $\Omega_R$, respectively. More precisely:
\begin{itemize}
 \item[(a)] spherical splines (based on the iterated Green function for the Beltrami operator; see, e.g., \cite{freeden81} and \cite{freeschrei09}) are applied for the approximation from local/regional data in $\Gamma_r$; for spherical cap radius $\rho=0.5$, we interpolate; for spherical cap radius $\rho=0.02$, we use a least square approach with smoothing parameter $\alpha=10^{-5}$ in order to deal with the ill-conditioned spline matrix (instead of a Gauss-Legendre grid, we choose approximately 30000 points in $\Gamma_r$ with similar spacing in longitudinal and latitudinal direction, avoiding concentration towards the poles and leading to better results for the spline method),
 \item[(b)] when using satellite data on $\Omega_R$ only, we apply the truncated singular value decomposition (TSVD) for different cut-off degrees $M\in\{0,2,4,\ldots,100\}$; in addition, the regularized collocation method (RCM) from \cite{naumov} with Tikhonov regularization is tested for different regularization parameters $\alpha\in\{10^{-7},\ldots,10^{-1}\}$ (the equiangular integration grid on $\Omega_R$ and the corresponding weights are the same as chosen before).
\end{itemize}

Tables \ref{tab:err4a} and \ref{tab:err4} indicate the results for the set of parameters that performed best for each of the methods above. We can make the following observations:
\begin{itemize}
\item[(1)] Tables  \ref{tab:err1} and \ref{tab:err2} show that the optimized kernels behave better than the Shannon-type kernels over the broad range of settings.
\item[(2)] The results in Table \ref{tab:err1} indicate the expected behaviour for the Shannon-type kernels: in the case $\varepsilon_1=\varepsilon_2$ only ground data is used (i.e., cut-off degree $M=0$), while the cut-off degree increases if the noise level of the ground data in $\Gamma_r$ is larger than for the satellite data on $\Omega_R$ (i.e., $\varepsilon_2>\varepsilon_1$). The same behaviour can be observed for the optimized kernels (cf. Figure \ref{fig:kernelplot}(a)), however, with a smoother transition between ground and satellite data: e.g., for the choice $\beta_n=3\cdot 10^3$, $\tilde{\alpha}_{N,n}=10^3$, ${\alpha}_{N,n}=10^3$, mostly ground data is used (i.e., $\tilde{\Psi}_N^\wedge(n)\approx 1$), but to a small extent (up to around spherical harmonic degree $n=10$) also satellite data is included (i.e., ${\Phi}_N^\wedge(n)\not=0$). This leads to an improvement of the relative error, e.g., by more than a factor two for the case $\eps_1=\eps_2=0.001$ (cf. Table \ref{tab:err1}(a)).
\item[(3)] If we reduce the spherical cap radius from $\rho=0.5$ to $\rho=0.02$, we find a similar behaviour as described in (2) (cf. Table \ref{tab:err2} and Figure \ref{fig:kernelplot}(b)). But we also find that for a small noise level $\varepsilon_1=0.001$ the relative error of the approximation is amplified significantly (by a factor of approximately $10$) while it behaves better for larger noise levels. Thus, the approximation error for small noise levels can be in first place accounted to an insufficient localization of the used kernels. Increasing the truncation degree $N$ of the scaling and wavelet kernels $\Phi_N$ and $\tilde{\Psi}_N$, respectively, according to the size of the region $\Gamma_r$ (i.e., according to the radius $\rho$) can compensate this problem.
\item[(4)] Table \ref{tab:err4a} shows that the results for splines that use only ground data in $\Gamma_r$ are comparable to the results for the optimized kernels from Tables \ref{tab:err1} and \ref{tab:err2}, at least if $\eps_1=\eps_2$. In case of higher noise levels in $\Gamma_r$ than on $\Omega_R$ (i.e., $\eps_2>\eps_1$), it becomes clear that the combination of ground and satellite data by optimized kernels yields improved results. Only for a small spherical cap radius $\rho=0.02$ and a small $\eps_1=0.001$ (cf. Table \ref{tab:err2}), spherical splines behave significantly better than the optimized kernels. However, as already mentioned in (3), this can be overcome by increasing $N$ for the optimized kernels (noting that the splines that we use are non-band-limited).
\item[(5)] In Table \ref{tab:err4} we can see that for our test example pure downward continuation from satellite data on $\Omega_R$ yields significantly worse results than all other tested methods. This holds true especially for TSVD. RCM reveals a better behaviour but can compete with the other methods only for high noise levels and $\eps_2$  significantly larger than $\eps_1$ (e.g., $\eps_1=0.1$ and $\eps_2=5\eps_1$). Remembering Remark \ref{rem:lockernels}, one might consider using the symbols of the Tikhonov kernel for the functional $\mathcal{F}$ instead of the symbols of Shannon-type kernels to obtain optimized kernels $\Phi_N$ and $\tilde{\Psi}_N$ that further improve the results for the latter situation.
\end{itemize}

\begin{rem}
Condition \eqref{eqn:ass1} suggests that the parameter $\beta_N$ should be significantly smaller than $\alpha_{N,n}$ and $\tilde{\alpha}_{N,n}$. This does not seem to be reflected by the parameters in Tables \ref{tab:err1} and \ref{tab:err2}. But Condition \eqref{eqn:propx} implies a relation between $N$ and $\eps_1$, $\eps_2$ that, for the choice $N=100$, leads to significantly smaller $\eps_1$, $\eps_2$ than those chosen in Tables \ref{tab:err1} and \ref{tab:err2}. Indeed, if we choose, e.g., $\eps_1=\eps_2=10^{-10}$  and repeat the above examples for $\rho=0.5$, we obtain a set of good parameters $\beta_N=10^{-6}$, $\alpha_{N,n}=10^4$, $\tilde{\alpha}_{N,n}=10^4$ that reflects the expected behaviour. However, our goal was to test realistic noise levels for the intended applications and show that the proposed method via optimized kernels works well for these scenarios.
\end{rem}

\begin{table}
    (a) \,\,\begin{tabular}{ | c || c | c || c | c | }
   \hline
	$\varepsilon_1$ & \multicolumn{2}{|c||}{Shannon} &\multicolumn{2}{|c|}{Optimized}\\ \cline{2-5}
	$(\varepsilon_2=\varepsilon_1)$ &  Rel. Error & Cut-Off $M$ & Rel. Error & Param. $\beta_N,\tilde{\alpha}_{N,n},\alpha_{N,n}$\\\hline
  0.001& 0.008 & 0 & 0.003 & $3\cdot 10^3, 10^3, 10^3$\\
  0.01 & 0.012& 0 & 0.010 & $7\cdot 10^3, 10^3, 10^3$\\
	0.05 & 0.050 & 0 & 0.048 & $7\cdot 10^3, 10^3, 2\cdot10^2$ \\
	0.1 & 0.098 & 0 & 0.096& $7\cdot 10^3, 10^3,2\cdot10^2$\\
	\hline
\end{tabular}\\[2ex]
 (b) \,\,\begin{tabular}{ | c || c | c || c | c | }
   \hline
	$\varepsilon_1$ & \multicolumn{2}{|c||}{Shannon} &\multicolumn{2}{|c|}{Optimized}\\ \cline{2-5}
	$(\varepsilon_2=2\varepsilon_1)$ &  Rel. Error & Cut-Off $M$ & Rel. Error & Param. $\beta_N,\tilde{\alpha}_{N,n},\alpha_{N,n}$\\\hline
  0.001 & 0.008 & 0 & 0.003& $5\cdot 10^2, 10^3, 2\cdot10^2$\\
  0.01 & 0.021 & 0 & 0.019& $3\cdot 10^{-2}, 5\cdot 10^2, 5\cdot 10^2$\\
	0.05 & 0.096 & 28 & 0.096 & $3\cdot 10^{-2}, 10^3, 2\cdot 10^2$ \\
	0.1 & 0.188 & 28 & 0.188& $3\cdot 10^{-2}, 10^3, 2\cdot 10^2$\\
	\hline
\end{tabular}\\[2ex]
 (c) \,\,\begin{tabular}{ | c || c | c || c | c | }
   \hline
	$\varepsilon_1$ & \multicolumn{2}{|c||}{Shannon} &\multicolumn{2}{|c|}{Optimized}\\ \cline{2-5}
	$(\varepsilon_2=5\varepsilon_1)$ &  Rel. Error & Cut-Off $M$ & Rel. Error & Param. $\beta_N,\tilde{\alpha}_{N,n},\alpha_{N,n}$\\\hline
  0.001 & 0.009 & 0 & 0.005& $7\cdot 10^{-2}, 10^3, 10^3$\\
  0.01 & 0.046 & 36 & 0.041& $10^{-2}, 10^2, 10^2$\\
	0.05 & 0.213 & 36 & 0.203 &  $10^{-2}, 10^2, 10^2$ \\
	0.1 & 0.424 & 38 & 0.406 &  $10^1, 3\cdot 10^6, 6\cdot10^5$\\
	\hline
\end{tabular}

\caption{Relative errors $\|U_N^\varepsilon-U^+\|_{L^2(\tilde{\Gamma}_r)}/\|U^+\|_{L^2(\tilde{\Gamma}_r)}$ and $\|U_M^{\varepsilon,Sh}-U^+\|_{L^2(\tilde{\Gamma}_r)}/\|U^+\|_{L^2(\tilde{\Gamma}_r)}$  for spherical cap radius $\rho=0.5$. The noise level $\varepsilon_2$ in $\Gamma_r$ is varied among (a)--(c).}\label{tab:err1}
\end{table}

\begin{table}
   (a) \,\,\begin{tabular}{ | c || c | c || c | c | }
   \hline
	$\varepsilon_1$ & \multicolumn{2}{|c||}{Shannon} &\multicolumn{2}{|c|}{Optimized}\\ \cline{2-5}
	$(\varepsilon_2=\varepsilon_1)$ &  Rel. Error & Cut-Off $M$ & Rel. Error & Param. $\beta_N,\tilde{\alpha}_{N,n},\alpha_{N,n}$\\\hline
  0.001& 0.014 & 0 & 0.011 &  $3\cdot 10^3, 10^3, 10^3$\\
  0.01 & 0.017& 0 & 0.014 & $7\cdot 10^3, 10^3, 2\cdot 10^2$\\
	0.05 & 0.052 & 0 & 0.047 & $7\cdot 10^3, 10^3, 2\cdot 10^2$ \\
	0.1 & 0.099 & 0 & 0.096& $7\cdot 10^3, 10^3, 2\cdot 10^2$\\
	\hline
\end{tabular}\\[2ex]
 (b) \,\,\begin{tabular}{ | c || c | c || c | c | }
   \hline
	$\varepsilon_1$ & \multicolumn{2}{|c||}{Shannon} &\multicolumn{2}{|c|}{Optimized}\\ \cline{2-5}
	$(\varepsilon_2=2\varepsilon_1)$ &  Rel. Error & Cut-Off $M$ & Rel. Error & Param. $\beta_N,\tilde{\alpha}_{N,n},\alpha_{N,n}$\\\hline
  0.001 & 0.014 & 0 & 0.011& $3\cdot 10^3, 10^3, 10^3$\\
  0.01 & 0.025 & 0 & 0.020& $7\cdot 10^3, 5\cdot 10^2, 5\cdot 10^2$\\
	0.05 & 0.099 & 26 & 0.094 & $7\cdot 10^{-1}, 10^3, 2\cdot 10^2$ \\
	0.1 & 0.194 & 26 & 0.191& $7\cdot 10^{-1}, 10^3, 2\cdot 10^2$\\
	\hline
\end{tabular}\\[2ex]
 (c) \,\,\begin{tabular}{ | c || c | c || c | c | }
   \hline
	$\varepsilon_1$ & \multicolumn{2}{|c||}{Shannon} &\multicolumn{2}{|c|}{Optimized}\\ \cline{2-5}
	$(\varepsilon_2=5\varepsilon_1)$ &  Rel. Error & Cut-Off $M$ & Rel. Error & Param. $\beta_N,\tilde{\alpha}_{N,n},\alpha_{N,n}$\\\hline
  0.001 & 0.015 & 0 & 0.012& $3\cdot 10^3, 5\cdot10^3, 10^3$\\
  0.01 & 0.051 & 42 & 0.045& $3\cdot 10^{-2}, 5\cdot 10^2, 10^2$\\
	0.05 & 0.235 & 40 & 0.234 & $7\cdot 10^{-3}, 5\cdot 10^2, 10^2$ \\
	0.1 & 0.469 & 38 & 0.470 & $7\cdot 10^{-3}, 5\cdot 10^2, 10^2$\\
	\hline
\end{tabular}

\caption{Relative error $\|U_N^\varepsilon-U^+\|_{L^2(\tilde{\Gamma}_r)}/\|U^+\|_{L^2(\tilde{\Gamma}_r)}$ and $\|U_M^{\varepsilon,Sh}-U^+\|_{L^2(\tilde{\Gamma}_r)}/\|U^+\|_{L^2(\tilde{\Gamma}_r)}$ for spherical cap radius $\rho=0.02$.  The noise level $\varepsilon_2$ in $\Gamma_r$ is varied among (a)--(c).}\label{tab:err2}
\end{table}

\begin{table}
\begin{center}
\begin{tabular}{ | c || c | c | }
   \hline
	$\varepsilon_2$ &  \multicolumn{2}{|c|}{Spline} \\ \cline{2-3}
	 &   $\rho=0.5$ & $\rho=0.02$   \\ 
	\hline
  0.001 & {0.004} & 0.003  \\
  0.01 &  {0.009}& 0.010 \\
	0.05 &  {0.046} & 0.049  \\
	0.1 &  0.093 & 0.100 \\
	0.25 &  0.230 & 0.251 \\
	0.5 &  0.461 & 0.502 \\
	\hline
\end{tabular}
\end{center}
\caption{Spline interpolation/approximation: relative error $\|U^{\varepsilon,Spline}-U^+\|_{L^2(\tilde{\Gamma}_r)}/\|U^+\|_{L^2(\tilde{\Gamma}_r)}$ for varying radius $\rho\in\{0.02,0.5\}$.}\label{tab:err4a}
\end{table}

\begin{table}
\begin{center}
\begin{tabular}{ | c || c | c || c | c | }
   \hline
	$\varepsilon_1$ &  \multicolumn{2}{|c||}{TSVD} & \multicolumn{2}{|c|}{RCM} \\ \cline{2-5}
	 &   Rel. Error & Cut-Off M & Rel. Error &  Param. $\alpha$\\ 
	\hline
  0.001 & {0.229} & 90& 0.099 & $10^{-4}$ \\
  0.01 &  {0.629}& 72 & 0.178& $6\cdot 10^{-4}$ \\
	0.05 &  {0.770} & 56 & 0.228 & $3\cdot 10^{-3}$ \\
	0.1 &  0.817 & 50 & 0.244 & $6\cdot 10^{-3}$ \\
	\hline
\end{tabular}
\end{center}
\caption{Downward continuation: relative errors $\|U_M^{\varepsilon,TSVD}-U^+\|_{L^2(\tilde{\Gamma}_r)}/\|U^+\|_{L^2(\tilde{\Gamma}_r)}$ and $\|U^{\varepsilon,RCM}_\alpha-U^+\|_{L^2(\tilde{\Gamma}_r)}/\|U^+\|_{L^2(\tilde{\Gamma}_r)}$.}\label{tab:err4}
\end{table}

\begin{figure}
(a)\begin{center}
\scalebox{0.4}{\includegraphics{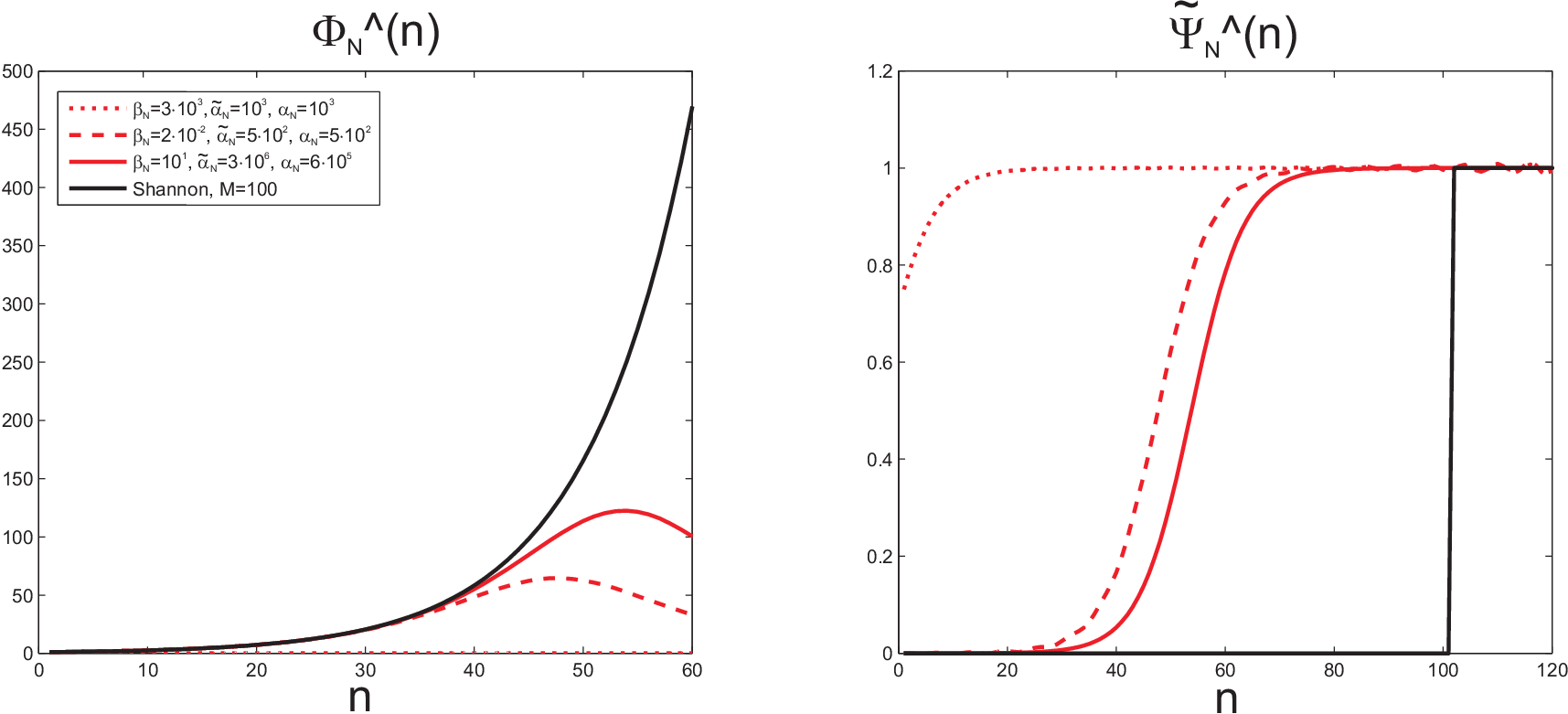}}\\[2ex]\end{center}
(b)\begin{center}\scalebox{0.4}{\includegraphics{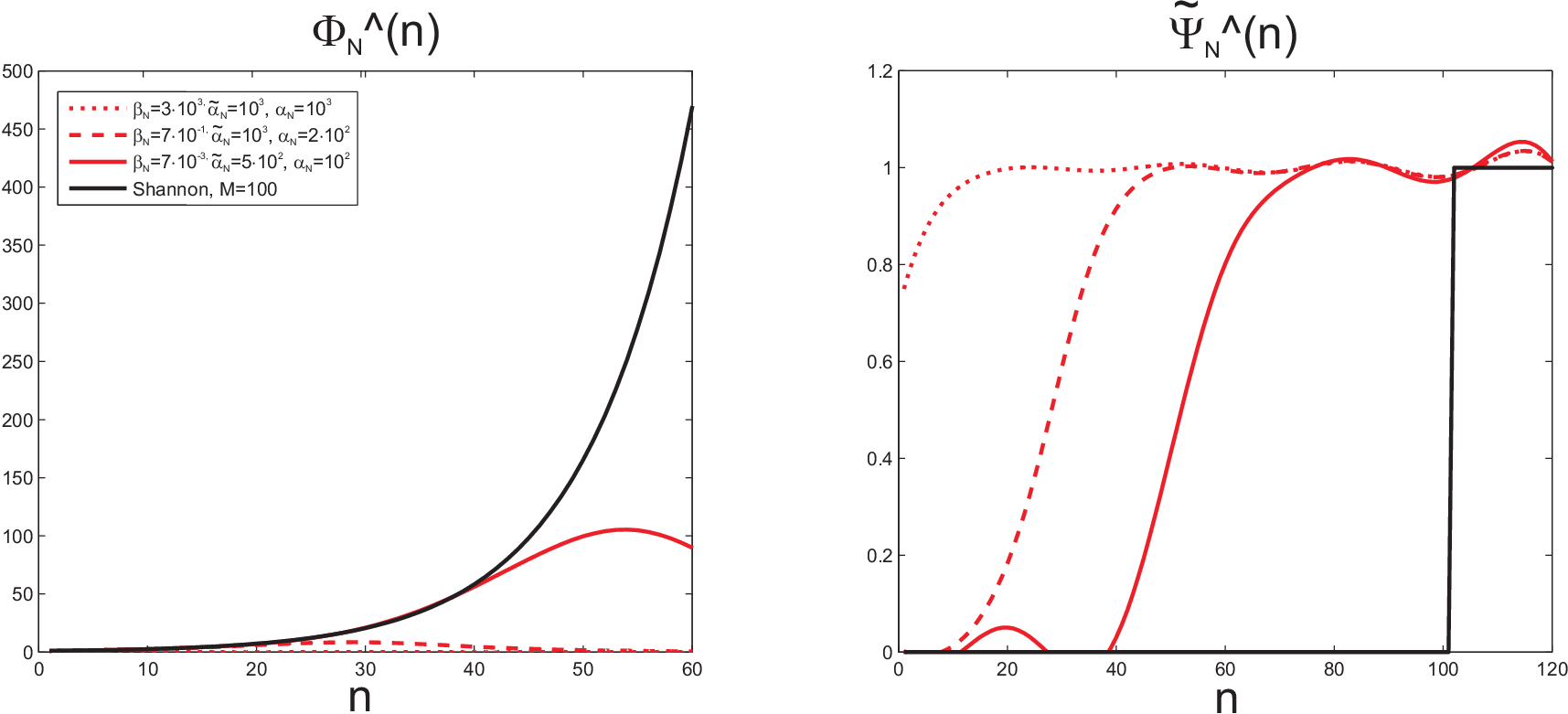}}\end{center}
\caption{Exemplary plots of the spectral behaviour of some of the optimized kernels used (a) in Table \ref{tab:err1} and (b) in Table \ref{tab:err2}.}\label{fig:kernelplot}
\end{figure}

\newpage

\section{The Vectorial Case}\label{sec:vect}

In some geophysical problems, especially in geomagnetism, it is not the scalar potential $U$ we are interested in but the vectorial gradient $\nabla U$. Just as well, it is often the gradient $\nabla U$ that is measured on $\Omega_R$ and $\Gamma_r$. Thus, we are not confronted with the scalar equations \eqref{eqn:11}--\eqref{eqn:13} but with the vectorial problem
\begin{align}
b=\nabla U,&\quad \textnormal{in }\Omega_r^{ext},\label{eqn:bvp441}
\\\Delta U=0,&\quad \textnormal{in }\Omega_r^{ext},\label{eqn:bvp442}
\\b=f_1,&\quad \textnormal{on }\Omega_R,\label{eqn:bvp443}
\\b=f_2,&\quad \textnormal{on }\Gamma_r.\label{eqn:bvp444}
\end{align}
Notationwise, we typically use lower-case letters to indicate vector fields, upper case letters for scalar fields, and boldface upper-case letters for tensor fields (the abbreviation $b$ for $\nabla U$ is simply chosen from common notation in geomagnetism). Starting from equations \eqref{eqn:bvp441}--\eqref{eqn:bvp444}, the general procedure for approximating $b^+=b|_{\Gamma_r}$  is essentially the same as for the scalar case treated in the previous sections. Therefore, we will be rather brief on the description and omit the proofs. An exception is given by Proposition \ref{prop:loc}, where we supply a vectorial counterpart to the localization property of Proposition \ref{prop:loc1}, and by Proposition \ref{lem:lockernel2} in order to indicate how the vectorial setting can be reduced to the previous scalar results.

Before dealing with the actual problem, it is necessary to introduce some basic vectorial framework. Here, we mainly follow the course of \cite{freeschrei09} and use the following set of vector spherical harmonics:
\begin{align}
{y}_{n,k}^{(1)}\left(\xi\right)&=\xi Y_{n,k}\left(\xi\right),\label{eqn:ynk1}
\\{y}_{n,k}^{(2)}\left(\xi\right)&=\frac{1}{\sqrt{n(n+1)}}\nabla^*_{\xi}Y_{n,k}\left(\xi\right),
\\{y}_{n,k}^{(3)}\left(\xi\right)&=\frac{1}{\sqrt{n(n+1)}}L_{\xi}^*Y_{n,k}\left(\xi\right),\label{eqn:ynk3}
\end{align}
for $n=1,2,\ldots$, and $k=1,\ldots, 2n+1$, with ${y}_{n,k}^{(1)}$ additionally being defined for $n=0$ and $k=1$. The operator $\nabla^*_\xi$ denotes the surface gradient, i.e., the tangential contribution of the gradient $\nabla_x$ (more precisely, $\nabla_x=\xi\frac{\partial}{\partial{r}}+\frac{1}{r}\nabla^*_{\xi}$, with $\xi=\frac{x}{|x|}$ and $r=|x|$). The surface curl gradient $L^*_{\xi}$ stands short for $\xi\wedge\nabla_{\xi}^*$ (with $\wedge$ being the vector product of two vectors $x,y\in\mathbb{R}^3$). Together, the functions \eqref{eqn:ynk1}--\eqref{eqn:ynk3} form an orthonormal basis of the space $l^2(\Omega)$ of vectorial functions that are square-integrable on the unit sphere. They are complemented by a set of tensorial Legendre polynomials $\mathbf{P}^{(i,i)}_n$ of degree $n$ and type $(i,i)$ that are defined via
\begin{align}
\mathbf{P}^{(1,1)}_n\left(\xi,\eta\right)&=\xi\otimes\eta \,P_n\left(\xi\cdot\eta\right),
\\\mathbf{P}^{(2,2)}_n\left(\xi,\eta\right)&=\frac{1}{{n(n+1)}}\nabla^*_{\xi}\otimes\nabla^*_{\eta}P_n\left(\xi\cdot\eta\right),
\\\mathbf{P}^{(3,3)}_n\left(\xi,\eta\right)&=\frac{1}{{n(n+1)}}L^*_{\xi}\otimes L^*_{\eta}P_n\left(\xi\cdot\eta\right).
\end{align}
The operator $\otimes$ denotes the tensor product $x\otimes y=xy^T$ of two vectors $x,y\in\mathbb{R}^3$. In analogy to the scalar case, vector spherical harmonics and tensorial Legendre polynomials are connected by an addition theorem. Since we are only dealing with vector fields of the form $\nabla U$ in this section, the vector spherical harmonics of type $i=3$ and the tensorial Legendre polynomials of type $(i,i)=(3,3)$ are not required and will be neglected for the remainder of this paper. The vectorial counterpart to the Sobolev space $\mathcal{H}_s(\Omega_r)$ is defined as
\begin{align}\label{eqn:sobspace2}
\mathfrak{h}_s(\Omega_r)=\Big\{f\in l^2(\Omega_r) :\|f\|_{\mathfrak{h}_s(\Omega_r)}^2=\sum_{i=1}^2\sum_{n=0_i}^\infty\sum_{k=1}^{2n+1}\left(n+\textnormal{\footnotesize$\frac{1}{2}$}\right)^{2s}\big|({f}_r^{(i)})^\wedge(n,k)\big|^2<\infty\Big\},
\end{align}
where $0_i=0$ if $i=1$ and $0_i=1$ if $i=2$, and $({f}_r^{(i)})^\wedge(n,k)$ denotes the Fourier coefficient of degree $n$, order $k$, and type $i$, i.e.,
\begin{align}
({f}_r^{(i)})^\wedge(n,k)=\int_{\Omega_r}f(y)\cdot \frac{1}{r}y_{n,k}^{(i)}\left(\frac{y}{|y|}\right)d\omega(y).
\end{align}
The space of band-limited tensorial kernels with maximal degree $N$ is defined as
\small\begin{align}\label{eqn:polnj2}
\mathbf{Pol}_{N}=\left\{\mathbf{K}(x,y)=\sum_{i=1}^2\sum_{n=0_i}^{N}\sum_{k=1}^{2n+1}\mathbf{K}^\wedge(n) {y}^{(i)}_{n,k}\left(\xi\right)\otimes{y}^{(i)}_{n,k}\left(\eta\right):\mathbf{K}^\wedge(n)\in\mathbb{R}\right\}.
\end{align}\normalsize
The kernels $\mathbf{Pol}_{N}$ are tensor-zonal. In particular, this means that the absolute value $|{\mathbf{K}}(x,y)|$ depends only on the scalar product $\xi\cdot\eta$ (this does not have to hold true for the kernel ${\mathbf{K}}(x,y)$ itself).

\begin{rem}
In geomagnetic modeling, another set of vector spherical harmonics is used more commonly than the one applied in this paper. We have used the basis \eqref{eqn:ynk1}--\eqref{eqn:ynk3} since it is generated by simpler differential operators, which reduces the effort to obtain a vectorial version of the localization principle later on. However, both basis systems eventually yield the same results. More information on the other basis system and its application in geomagnetism can be found, e.g., in \cite{backus96}, \cite{ger11}, \cite{ger12}, \cite{gubbins12}, and \cite{mayer06}.
\end{rem}

Similar to the scalar case in Subsection \ref{subsec:dc}, there is a vectorial upward continuation operator $t^{up}$ and a vectorial downward continuation operator $t^{down}$, defined via tensorial kernels with singular values $\sigma_n=\left(\frac{r}{R}\right)^{n+1}$ and $\frac{1}{\sigma_n}$, respectively (note that in the scalar case, $\sigma_n=\left(\frac{r}{R}\right)^{n}$). The downward continuation operator can be approximated by a bounded operator $t_N:l^2(\Omega_R)\to l^2(\Omega_r)$:
\begin{align}
{t_N[f_1](x)=\int_{\Omega_R}\mathbf{\Phi}_N(x,y) f_1(y)d\omega(y),\quad x\in\Omega_r,}
\end{align}
with
\begin{align}
{\mathbf{\Phi}_N(x,y)=\sum_{i=1}^2\sum_{n=0_i}^{N}\sum_{k=1}^{2n+1}\mathbf{\Phi}_{N}^\wedge(n)\frac{1}{r}{y}^{(i)}_{n,k}\left(\xi\right)\otimes \frac{1}{R}{y}^{(i)}_{n,k}\left(\eta\right).}\label{eqn:scalkernel2}
\end{align}
The symbols $\mathbf{\Phi}_{N}^\wedge(n)$ need to satisfy the same conditions (a), (b) as for the scalar analogue in Subsection \ref{subsec:dc}. A refinement by local data in $\Gamma_r$ is achieved by the vectorial wavelet operator $\tilde{{w}}_N:l^2({\Gamma}_r)\to l^2(\tilde{\Gamma}_r)$:
\begin{align}\label{eqn:wjdef}
{\tilde{{w}}_N[f_2](x)=\int_{\mathcal{C}_r(x,\rho)}\tilde{\mathbf{\Psi}}_N(x,y) f_2(y)d\omega(y),\quad x\in\tilde{\Gamma}_r,}
\end{align}
with
\begin{align}
{\tilde{\mathbf{\Psi}}_N(x,y)=\sum_{i=1}^2\sum_{n=0_i}^{\lfloor\kappa N\rfloor}\sum_{k=1}^{2n+1}\tilde{ \mathbf{\Psi}}_{N}^\wedge(n)\frac{1}{r}{y}^{(i)}_{n,k}\left(\xi\right)\otimes \frac{1}{r}{y}^{(i)}_{n,k}\left(\eta\right)},\label{eqn:wavkernel2}
\end{align}
for some fixed $\kappa>1$, and 
\begin{align}
{\tilde{ \mathbf{\Psi}}_{N}^\wedge(n)=\tilde{ \mathbf{\Phi}}_{N}^\wedge(n)-\mathbf{\Phi}_{N}^\wedge(n)\sigma_n.}
\end{align}
The symbols $\tilde{ \mathbf{\Phi}}_{N}^\wedge(n)$ are assumed to satisfy conditions $($a'$)$ and $($b'$)$ from Subsection \ref{subsec:comb}. As in the scalar case, the input data $f_1$, $f_2$ is perturbed by deterministic noise $e_1\in l^2(\Omega_R)$ and $e_2\in l^2(\Omega_r)$, so that we are dealing with $f_1^{\eps_1}=f_1+\eps_1 e_1$ and $f_2^{\eps_2}=f_2+\eps_2 e_2$. An approximation of $b^+$ in $\tilde{\Gamma}_r$ is then defined by
\begin{align}\label{eqn:approxbj3}
{b_N^{\eps}={t}_{N}[f_1^{\eps_1}]+\tilde{{w}}_{N}[f_2^{\eps_2}].}
\end{align}
A similar error estimate as in \eqref{eqn:errorestfinal} motivates the minimization of the functional
\begin{equation}\label{eqn:mineq4}
{\begin{aligned}
\mathcal{F}(\mathbf{\Phi}_N,\tilde{\mathbf{\Psi}}_N)=&\sum_{n=0}^{\lfloor\kappa N\rfloor}\tilde{\alpha}_{N,n}\big|1-\tilde{\mathbf{\Phi}}_{N}^\wedge(n)\big|^2+\sum_{n=0}^{N}\alpha_{N,n}\big|1-{\mathbf{\Phi}}_{N}^\wedge(n)\sigma_n\big|^2
\\&+\beta_N\sum_{n=0}^{N}\big|\mathbf{\Phi}_N^\wedge(n)\big|^2+8\pi^2 r^4\big\|\tilde{\mathbf{\Psi}}_N\big\|^2_{L^2([-1,1-\rho])}.
\end{aligned}}
\end{equation}
In the vectorial case, the term $\|\tilde{\mathbf{\Psi}}_N\|^2_{L^2([-1,1-\rho])}$ is not quite as basic as in the scalar case \eqref{eqn:1dint} but still expressible by elementary means. For more details on the vectorial and tensorial setup, the reader is referred to \cite{freeschrei09}. Now, we are all set to state the vectorial counterparts to the theoretical results from Section \ref{sec:theo}. As mentioned earlier, the proofs are mostly omitted due to their similarity. 

\begin{lem}\label{prop:minsol4}
Assume that all parameters $\tilde{\alpha}_{N,n}$, $\alpha_{N,n}$, and $\beta_N$ are positive. Then there exists unique minimizers $\mathbf{\Phi}_N\in\textnormal{\textbf{Pol}}_{N}$ and $\tilde{\mathbf{\Psi}}_N\in\textnormal{\textbf{Pol}}_{\lfloor\kappa N\rfloor}$ of the functional $\mathcal{F}$ in \eqref{eqn:mineq4} that are determined by $\phi=(\mathbf{\Phi}_N^\wedge(1)\sigma_1,\ldots,\mathbf{\Phi}_N^\wedge(N)\sigma_{N},\tilde{\mathbf{\Phi}}_{N}^\wedge(1),\ldots,\tilde{\mathbf{\Phi}}_{N}^\wedge(\lfloor\kappa N\rfloor))^T$ which solves the linear equations
\begin{align}\label{eqn:lineq4}
\mathbf{M}{\phi}=\alpha,
\end{align}
where 
\begin{align}
\mathbf{M}=\left(\begin{array}{c|c}\mathbf{D}_1+\mathbf{P}_1&-\mathbf{P}_2\\\hline-\mathbf{P}_3&\mathbf{D}_2+\mathbf{P}_4\end{array}\right),
\end{align}
and $\alpha=(\alpha_{N,1},\ldots,\alpha_{N,N},\tilde{\alpha}_{N,1},\ldots,\tilde{\alpha}_{N,\lfloor\kappa N\rfloor})^T$. The diagonal matrices $\mathbf{D}_1$, $\mathbf{D}_2$ are given by 
\begin{align}
\mathbf{D}_1=\textnormal{diag}\left(\frac{\beta_N}{\sigma_n^2}+\alpha_{N,n}\right)_{n=0,\ldots,N}, \qquad\mathbf{D}_2=\textnormal{diag}\big(\tilde{\alpha}_{N,n}\big)_{n=0,\ldots,\lfloor\kappa N\rfloor},
\end{align}
whereas $\mathbf{P}_1$,\ldots, $\mathbf{P}_4$ are submatrices of the Gram matrix $\big(P_{n,m}^\rho\big)_{n,m=0,\ldots, \lfloor\kappa N\rfloor}$. More precisely,
\begin{align}
\mathbf{P}_1=\big(P_{n,m}^\rho\big)_{n,m=0,\ldots,N},&\qquad \mathbf{P}_2=\big(P_{n,m}^\rho\big)_{n=0,\ldots,N\atop m=0,\ldots,N\lfloor\kappa N\rfloor},
\\\mathbf{P}_3=\big(P_{n,m}^\rho\big)_{n=0,\ldots,\lfloor\kappa N\rfloor\atop m=0,\ldots,N},&\qquad\mathbf{P}_4=\big(P_{n,m}^\rho\big)_{n,m=0,\ldots,\lfloor\kappa N\rfloor}.
\end{align}
with
\small\begin{align}
P_{n,m}^\rho=&\frac{(2n+1)(2m+1)}{16\pi^2}\int_{-1}^{1-\rho}P_n\left(t\right)P_m\left(t\right)dt
\\&+\frac{(2n+1)(2m+1)}{16\pi^2n(n+1)m(m+1)}\bigg(\int_{-1}^{1-\rho}(1+t^2)P_n'(t)P_m'(t)dt+\int_{-1}^{1-\rho}(1-t^2)^2P_n''(t)P_m''(t)dt\nonumber
\\&\quad-\int_{-1}^{1-\rho}t(1-t^2)P_n''(t)P_m'(t)dt-\int_{-1}^{1-\rho}t(1-t^2)P_n'(t)P_m''(t)dt\bigg),\nonumber
\end{align}\normalsize
for $n\not=0$ and $m\not=0$. If $n=0$ or $m=0$, then $P_{n,m}^\rho=\frac{(2n+1)(2m+1)}{16\pi^2}\int_{-1}^{1-\rho}P_n\left(t\right)P_m\left(t\right)dt$. By $P_n'$, $P_n''$ we mean the first and second order derivatives of the Legendre polynomials.
\end{lem}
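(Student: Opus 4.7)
The plan mirrors the scalar proof of Lemma \ref{prop:minsol}, with one genuinely new computation arising from the tensorial structure. Structurally, I need (i) a quadratic-form expansion of the localization penalty in terms of the unknown symbols, (ii) the Euler equations obtained by differentiating $\mathcal{F}$, and (iii) positive definiteness of the system matrix.

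First, I would use the addition theorem for vector spherical harmonics, $\sum_{k=1}^{2n+1}y_{n,k}^{(i)}(\xi)\otimes y_{n,k}^{(i)}(\eta)=\frac{2n+1}{4\pi}\mathbf{P}_n^{(i,i)}(\xi,\eta)$, to rewrite the wavelet kernel \eqref{eqn:wavkernel2} as $\tilde{\mathbf{\Psi}}_N(x,y)=\sum_{i=1}^{2}\sum_{n=0_i}^{\lfloor\kappa N\rfloor}\frac{2n+1}{4\pi r^2}\tilde{\mathbf{\Psi}}_N^\wedge(n)\mathbf{P}_n^{(i,i)}(\xi,\eta)$. By tensor-zonality, the Frobenius norm $|\tilde{\mathbf{\Psi}}_N(\xi,\eta)|^2$ depends only on $t=\xi\cdot\eta$, so $8\pi^2 r^4\|\tilde{\mathbf{\Psi}}_N\|_{L^2([-1,1-\rho])}^2$ becomes a double sum in $n,m$ of the form $(\tilde{\mathbf{\Phi}}_N^\wedge(n)-\mathbf{\Phi}_N^\wedge(n)\sigma_n)(\tilde{\mathbf{\Phi}}_N^\wedge(m)-\mathbf{\Phi}_N^\wedge(m)\sigma_m)\,P_{n,m}^\rho$, in direct analogy with \eqref{eqn:psinorm}.

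The main obstacle is identifying $P_{n,m}^\rho$ explicitly. The $(1,1)$--$(1,1)$ pairing reduces to $P_n(t)P_m(t)$ since $|\xi|=|\eta|=1$, giving the scalar-type term $\frac{(2n+1)(2m+1)}{16\pi^2}\int_{-1}^{1-\rho}P_nP_m\,dt$. The cross pairings $(1,1)$--$(2,2)$ vanish by tangentiality of the surface gradient, $\xi\cdot\nabla_\xi^* P_n(\xi\cdot\eta)=0$, and analogously in $\eta$. The nontrivial computation is the $(2,2)$--$(2,2)$ pairing: using $\nabla_\xi^* P_n(\xi\cdot\eta)=P_n'(\xi\cdot\eta)\bigl(\eta-(\xi\cdot\eta)\xi\bigr)$ and differentiating once more to form $\nabla_\xi^*\otimes\nabla_\eta^* P_n(\xi\cdot\eta)$, I would expand the Frobenius trace against the analogous tensor of degree $m$ into scalar multiples of $P_n^{(k_1)}(t)P_m^{(k_2)}(t)$ weighted by polynomials in $t$. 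Grouping the four pairings $(k_1,k_2)\in\{1,2\}^2$ and using $1-t^2=|\eta-t\xi|^2$ yields exactly the weights $(1+t^2)$, $(1-t^2)^2$, and $t(1-t^2)$ appearing in the statement, with the prefactor $\frac{1}{n(n+1)m(m+1)}$ inherited from the normalization of $\mathbf{P}_n^{(2,2)}$. For $n=0$ or $m=0$ the type-$(2,2)$ index is absent and only the first summand survives, explaining the special case.

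Once $P_{n,m}^\rho$ is in hand, the rest is book-keeping identical to the scalar case. Inserting the expanded penalty into \eqref{eqn:mineq4}, differentiating with respect to $\mathbf{\Phi}_N^\wedge(n)\sigma_n$ for $n\leq N$ and with respect to $\tilde{\mathbf{\Phi}}_N^\wedge(n)$ for $n\leq\lfloor\kappa N\rfloor$, and equating to zero, produces the block system $\mathbf{M}\phi=\alpha$ with the same diagonal blocks $\mathbf{D}_1,\mathbf{D}_2$ and off-diagonal submatrices of $(P_{n,m}^\rho)$ as claimed. Positive definiteness of $\mathbf{M}$ follows verbatim from the scalar argument: the tensor-zonal fields underlying $(P_{n,m}^\rho)$ are linearly independent on $[-1,1-\rho]$, making the Gram block positive definite, and the strictly positive diagonals $\mathbf{D}_1,\mathbf{D}_2$ preserve this property. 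Hence $\mathbf{M}\phi=\alpha$ has a unique solution, and by coercivity of the quadratic functional $\mathcal{F}$ in the symbols, this stationary point is its unique global minimizer.
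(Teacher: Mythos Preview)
Your proposal is correct and mirrors precisely the approach the paper intends: the paper omits the proof of this lemma, stating that it follows in the same manner as the scalar Lemma~\ref{prop:minsol}, and your outline fills in exactly the one new ingredient (the explicit identification of $P_{n,m}^\rho$ via the Frobenius pairing of the tensorial Legendre kernels, with the cross terms vanishing by tangentiality of $\nabla^*_\xi$). The remaining steps---differentiation of the quadratic form and positive definiteness of $\mathbf{M}$---are verbatim from the scalar case, as you say.
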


To show that the left expression of \eqref{eqn:altterms} vanishes in the scalar case as $N\to\infty$, we used the localization property from Proposition \ref{prop:loc1}. A similar result is needed to prove Theorem \ref{thm:convres2}. The corresponding vectorial localization property for the Shannon-type kernel is stated in the next proposition.

\begin{prop}\label{prop:loc} 
If $f\in\mathfrak{h}_s(\Omega_r)$, $s\geq 2$, it holds that
\begin{align}\label{eqn:locpropvect}
\lim_{N\to\infty}\left\|\int_{\Omega_r\setminus\mathcal{C}_r(\rho,\cdot)}\tilde{\mathbf{\Psi}}^{Sh}_N(\cdot,y)f(y)d\omega(y)\right\|_{l^2 (\Omega_r)}=0,
\end{align}
where $\tilde{\mathbf{\Psi}}^{Sh}_N$ is the tensorial Shannon-type kernel with symbols $\big(\tilde{\mathbf{\Psi}}_N^{Sh}\big)^\wedge(n)=1$, if $N +1\leq n\leq \lfloor\kappa N\rfloor$, and $\big(\tilde{\mathbf{\Psi}}_N^{Sh}\big)^\wedge(n)=0$, else.
\end{prop}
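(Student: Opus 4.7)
The plan is to reduce this vectorial localization to the scalar Proposition~\ref{prop:loc1} by exploiting that $f$ and $\tilde{\mathbf{\Psi}}_N^{Sh}$ split naturally into type-pure pieces. Write $f = f^{(1)} + f^{(2)}$, with $f^{(i)}$ the projection onto type-$i$ vector spherical harmonics, and analogously $\tilde{\mathbf{\Psi}}_N^{Sh} = \tilde{\mathbf{\Psi}}_N^{Sh,(1)} + \tilde{\mathbf{\Psi}}_N^{Sh,(2)}$. Because $y_{n,k}^{(1)}(\eta)$ is radial while $y_{n,k}^{(2)}(\eta)$ is tangential to $\Omega$ at $\eta$, the mixed type contractions $y^{(1)}_{n,k}(\eta)\cdot f^{(2)}(y)$ and $y^{(2)}_{n,k}(\eta)\cdot f^{(1)}(y)$ vanish pointwise, so the integral decouples cleanly as $I_N^{(1)}(x) + I_N^{(2)}(x)$ and the two parts can be handled separately.

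For the type-1 contribution, $y_{n,k}^{(1)}(\xi) = \xi Y_{n,k}(\xi)$ factors out the unit radial vector, and $y_{n,k}^{(1)}(\eta)\cdot f(y) = Y_{n,k}(\eta)F_1(y)$ with $F_1(y) := \eta \cdot f(y)$, so that $I_N^{(1)}(x) = \xi \int_{\Omega_r \setminus \mathcal{C}_r(x,\rho)} \tilde{\Psi}_N^{Sh}(x,y) F_1(y)\,d\omega(y)$ with $\tilde{\Psi}_N^{Sh}$ the scalar Shannon kernel of Proposition~\ref{prop:loc1}. A short computation gives $(F_1)_r^\wedge(n,k) = (f_r^{(1)})^\wedge(n,k)$, hence $F_1 \in \mathcal{H}_s(\Omega_r)$ with $\|F_1\|_{\mathcal{H}_s(\Omega_r)} \leq \|f\|_{\mathfrak{h}_s(\Omega_r)}$. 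Since $|\xi| = 1$, $\|I_N^{(1)}\|_{l^2(\Omega_r)}$ equals the scalar $L^2(\Omega_r)$-norm of the integral, which tends to zero by Proposition~\ref{prop:loc1}.

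The type-2 part carries the real work. By the tensorial addition theorem, $\tilde{\mathbf{\Psi}}_N^{Sh,(2)}(x,y) = r^{-2}\,\nabla_\xi^* \otimes \nabla_\eta^* \Lambda_N(\xi \cdot \eta)$ with $\Lambda_N(t) := \sum_{n=N+1}^{\lfloor \kappa N \rfloor} \frac{2n+1}{4\pi n(n+1)} P_n(t)$, so that $\tilde{\mathbf{\Psi}}_N^{Sh,(2)}(x,y) f^{(2)}(y) = r^{-2}\, \nabla_\xi^*[\nabla_\eta^* \Lambda_N(\xi\cdot\eta) \cdot f^{(2)}(y)]$. I would represent $f^{(2)}(y) = \nabla_\eta^* F_2(\eta)$ for the scalar potential $F_2$ whose Fourier coefficients are $(f_r^{(2)})^\wedge(n,k)/\sqrt{n(n+1)}$, which, viewed as a function on $\Omega_r$, gains one order of Sobolev regularity over $f$. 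Applying Green's identity on $\Omega_r \setminus \mathcal{C}_r(x,\rho)$ to $\nabla_\eta^* \Lambda_N \cdot \nabla_\eta^* F_2$, together with $\Delta_\eta^* P_n(\xi\cdot\eta) = -n(n+1)P_n(\xi\cdot\eta)$, causes the factor $\tfrac{1}{n(n+1)}$ in $\Lambda_N$ to cancel, leaving an interior piece that, modulo the outer $\nabla_\xi^*$, reduces to the scalar Shannon kernel acting on a function in $\mathcal{H}_s(\Omega_r)$; Proposition~\ref{prop:loc1} then takes care of this part. The outer $\nabla_\xi^*$ is absorbed by a symmetric argument in $\xi$, producing one further interior piece of the same type and an additional boundary contribution.

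The main obstacle is the boundary contribution on $\partial \mathcal{C}_r(x,\rho)$ (produced both by Green's identity in $\eta$ and by moving $\nabla_\xi^*$ across the $x$-dependent integration domain). On this boundary $\xi \cdot \eta = 1 - \rho \in (-1,1)$ is fixed, so the classical Legendre estimate $|P_n(1-\rho)| \leq C_\rho n^{-1/2}$ yields $|\Lambda_N(1-\rho)| = O(N^{-1/2})$, with analogous polynomial bounds for the low-order derivatives appearing after the $\nabla_\xi^*$ manipulations. Combined with uniform trace bounds for $f^{(2)}$ and its associated potential $F_2$ on the smooth boundary circle (where the assumption $s \geq 2$ provides just enough Sobolev regularity), this forces the boundary contribution to vanish in $l^2(\Omega_r)$ as $N \to \infty$. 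Combining the type-1 and type-2 parts then yields~\eqref{eqn:locpropvect}.
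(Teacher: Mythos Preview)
Your type-$1$ reduction is fine and matches the paper's argument. The genuine gap is in the type-$2$ part, and it sits in two connected places.

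First, the step you call ``a symmetric argument in $\xi$'' does not go through. After Green's formula in $\eta$ you are left with $\nabla_\xi^*$ applied to an integral of the scalar Shannon kernel against $F_2$, and Proposition~\ref{prop:loc1} only controls the $L^2$-norm of that integral, not of its surface gradient. To shift the remaining $\xi$-derivative back onto $\eta$ (and hence onto $F_2$, where the extra Sobolev order of $F_2$ can absorb it) you would need $\nabla_\xi^* P_n(\xi\cdot\eta) = -\nabla_\eta^* P_n(\xi\cdot\eta)$, which is false: both sides equal $P_n'(\xi\cdot\eta)$ times the tangential projection of the \emph{other} variable, and those are different vectors. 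The paper gets around this by first applying $\xi\wedge$, which replaces $\nabla_\xi^*$ by $L_\xi^*$; the surface curl gradient \emph{does} satisfy $L_\xi^* P_n(\xi\cdot\eta) = -L_\eta^* P_n(\xi\cdot\eta)$ on zonal kernels, so a second integration by parts in $\eta$ produces the scalar Shannon kernel acting on $L_\eta^* F_2 \in \mathcal{H}_s(\Omega_r)$, and Proposition~\ref{prop:loc1} then applies directly.

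Second, your asymptotic handling of the boundary terms is not correct. It is true that $\Lambda_N(1-\rho)=O(N^{-1/2})$, but the boundary integrals that actually appear involve normal derivatives, hence $\Lambda_N'(1-\rho)$ and, after differentiating in $\xi$, also $\Lambda_N''(1-\rho)$. Using $P_n'(t)=O(n^{1/2})$ and $P_n''(t)=O(n^{3/2})$ at interior $t$, one finds $\Lambda_N'(1-\rho)=O(N^{1/2})$ and $\Lambda_N''(1-\rho)=O(N^{3/2})$: these grow, so no decay estimate of this type forces the boundary contribution to zero. In the paper the boundary terms are not estimated but shown to \emph{cancel exactly}: after the $L_\xi^*$ manipulation the two boundary integrals combine to a single integrand proportional to $(1-t^2)P_n''(t)-2tP_n'(t)+n(n+1)P_n(t)$ evaluated at $t=1-\rho$, which vanishes by the Legendre differential equation.
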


\begin{proof}
We first observe that
\begin{align}\label{eqn:vectloceq1}
&\int\limits_{\Omega_r\setminus\mathcal{C}_r(x,\rho)}\tilde{\mathbf{\Psi}}^{Sh}_{N}(x,y) f(y)d\omega(y)
\\&=\int\limits_{\Omega_r\setminus\mathcal{C}_r(x,\rho)}\left(\sum_{i=1}^2\sum_{n=N+1}^{\lfloor\kappa N\rfloor}\sum_{k=1}^{2n+1}\frac{1}{r}y_{n,k}^{(i)}\left(\xi\right)\otimes \frac{1}{r}y_{n,k}^{(i)}\left(\eta\right)\right) f(r\eta)d\omega(r\eta)\nonumber
\\&=\sum_{i=1}^2\sum_{n=N+1}^{\lfloor\kappa N\rfloor}\sum_{k=1}^{2n+1}\frac{1}{r}y_{n,k}^{(i)}\left(\xi\right)\int\limits_{\Omega_r\setminus\mathcal{C}_r(x,\rho)}\frac{1}{r}y_{n,k}^{(i)}\left(\eta\right)\cdot f(r\eta)d\omega(r\eta),\nonumber
\end{align}
where, as always, $\xi=\frac{x}{|x|}$ and $\eta=\frac{y}{|y|}$. Since $f$ is of class $\mathfrak{h}_s(\Omega_r)$, $s\geq 2$, it follows that $f(x)=\xi F_1(x)+\nabla^*_\xi F_2(x)$ for some scalar functions $F_1$ of class $\mathcal{H}_s(\Omega_r)$ and $F_2$ of class $\mathcal{H}_{s+1}(\Omega_r)$. Taking a closer look at the terms of type $i=1$ in \eqref{eqn:vectloceq1}, using the orthogonality of $\xi$ and $\nabla^*_\xi$, we obtain
\begin{align}
&\sum_{k=1}^{2n+1}\frac{1}{r}y_{n,k}^{(1)}\left(\xi\right)\int\limits_{\Omega_r\setminus\mathcal{C}_r(x,\rho)}\frac{1}{r}y_{n,k}^{(1)}\left(\eta\right)\cdot f(r\eta)d\omega(r\eta)\label{eqn:eqn1}
\\&=\sum_{k=1}^{2n+1}\frac{1}{r}y_{n,k}^{(1)}\left(\xi\right)\int\limits_{\Omega_r\setminus\mathcal{C}_r(x,\rho)}\frac{1}{r}y_{n,k}^{(1)}\left(\eta\right)\cdot \left(\eta F_1(r\eta)+\nabla^*_{\eta}F_2(r\eta)\right)d\omega(r\eta)\nonumber
\\&=\sum_{k=1}^{2n+1}\frac{1}{r^2}\xi Y_{n,k}\left(\xi\right)\int\limits_{\Omega_r\setminus\mathcal{C}_r(x,\rho)}Y_{n,k}\left(\eta\right)F_1(r\eta)d\omega(r\eta)\nonumber
\\&=\frac{1}{r^2}\xi\int\limits_{\Omega_r\setminus\mathcal{C}_r(x,\rho)}\frac{2n+1}{4\pi}P_n\left(\xi\cdot\eta\right)F_1(r\eta)d\omega(r\eta).\nonumber
\end{align}
For the terms of type $i=2$, the use of Green's formulas (cf. \cite{ger11} for more details) and the addition theorem for spherical harmonics implies
\begin{align}
&\xi\wedge\sum_{k=1}^{2n+1}\frac{1}{r}y_{n,k}^{(2)}\left(\xi\right)\int\limits_{\Omega_r\setminus\mathcal{C}_r(x,\rho)}\frac{1}{r}y_{n,k}^{(2)}\left(\eta\right)\cdot f(r\eta)d\omega(r\eta)\label{eqn:shit}
\\&=\xi\wedge\sum_{k=1}^{2n+1}\frac{1}{r}y_{n,k}^{(2)}\left(\xi\right)\nonumber
\int\limits_{\Omega_r\setminus\mathcal{C}_r(x,\rho)}\frac{1}{r}\frac{1}{\sqrt{n(n+1)}}\nabla^*_{\eta}Y_{n,k}\left(\eta\right)\cdot \nabla^*_{\eta}F_2(r\eta)d\omega(r\eta)\nonumber
\\&=-\xi\wedge\sum_{k=1}^{2n+1}\frac{1}{r^2}\frac{1}{{n(n+1)}}\nabla^*_{\xi}Y_{n,k}\left(\xi\right)\int\limits_{\Omega_r\setminus\mathcal{C}_r(x,\rho)}\left(\Delta^*_{\eta}Y_{n,k}\left(\eta\right)\right)F_2(r\eta)d\omega(r\eta)\nonumber
\\&\quad+\xi\wedge\sum_{k=1}^{2n+1}\frac{1}{r^2}\frac{1}{{n(n+1)}}\nabla^*_{\xi}Y_{n,k}\left(\xi\right)\int_{\partial\mathcal{C}_r(x,\rho)}F_2(r\eta)\frac{\partial}{\partial\nu_\eta}Y_{n,k}\left(\eta\right)d\sigma(r\eta)\nonumber
%
\\&=\sum_{k=1}^{2n+1}\frac{1}{r^2}L^*_{\xi}Y_{n,k}\left(\xi\right)\int\limits_{\Omega_r\setminus\mathcal{C}_r(x,\rho)}Y_{n,k}\left(\eta\right)F_2(r\eta)d\omega(r\eta)\nonumber
\\&\quad+\frac{1}{r^2}\frac{1}{{ n(n+1)}}\int_{\partial\mathcal{C}_r(x,\rho)}F_2(r\eta)\frac{2n+1}{{4\pi}}L^*_{\xi}\frac{\partial}{\partial\nu_\eta}P_{n}\left(\xi\cdot\eta\right)d\sigma(r\eta)\nonumber
\\&=-\frac{1}{r^2}\int\limits_{\Omega_r\setminus\mathcal{C}_r(x,\rho)}F_2(r\eta)\frac{2n+1}{4\pi}L_{\eta}^*P_n\left(\xi\cdot\eta\right)d\omega(r\eta)\nonumber
\\&\quad+\frac{1}{r^2}\frac{1}{{ n(n+1)}}\int_{\partial\mathcal{C}_r(x,\rho)}F_2(r\eta)\frac{2n+1}{{4\pi}}L^*_{\xi}\frac{\partial}{\partial\nu_\eta}P_{n}\left(\xi\cdot\eta\right)d\sigma(r\eta)\nonumber
\\&=\frac{1}{r^2}\int\limits_{\Omega_r\setminus\mathcal{C}_r(x,\rho)}\frac{2n+1}{4\pi}P_n\left(\xi\cdot\eta\right)L_{\eta}^*F_2(r\eta)d\omega(r\eta)\nonumber
\\&\quad+\frac{1}{r^2}\int_{\partial\mathcal{C}_r(x,\rho)}\tau_{\eta}F_2(r\eta)\frac{2n+1}{{4\pi}}P_{n}\left(\xi\cdot\eta\right)d\sigma(r\eta)\nonumber
\\&\quad+\frac{1}{r^2}\frac{1}{{ n(n+1)}}\int_{\partial\mathcal{C}_r(x,\rho)}F_2(r\eta)\frac{2n+1}{{4\pi}}L^*_{\xi}\frac{\partial}{\partial\nu_\eta}P_{n}\left(\xi\cdot\eta\right)d\sigma(r\eta)\nonumber
\end{align}\normalsize
By $\frac{\partial}{\partial\nu_\eta}$ we mean the normal derivative at $r\eta\in\partial\mathcal{C}_r(x,\rho)$, and $\tau_\eta$ denotes the tangential unit vector at $r\eta\in\partial\mathcal{C}_r(x,\rho)$. The reason for the application of $\xi\wedge$ in \eqref{eqn:shit} is that we can then work with the operator $L_{\xi}^*$ instead of $\nabla_\xi ^*$. The surface curl gradient has the nice property $L_{\xi}^*P_{n}\left(\xi\cdot\eta\right)=-L_{\eta}^*P_{n}\left(\xi\cdot\eta\right)$ which we have used in the seventh line of Equation \eqref{eqn:shit}. Furthermore, since $\xi$ and $\nabla_{\xi}^*$ are orthogonal, the convergence of \eqref{eqn:shit} for $N\to \infty$ also implies convergence for the same expression without the application of  $\xi\wedge$. 
Now, we can use the scalar localization result from Proposition \ref{prop:loc} to obtain 
\begin{align}
\lim_{N\to\infty}\Bigg\|\frac{\cdot}{|\cdot|}\int\limits_{\Omega_r\setminus\mathcal{C}_r(\rho,\cdot)}\Bigg(\underbrace{\sum_{n=N+1}^{\lfloor\kappa N\rfloor}\frac{2n+1}{4\pi r^2}P_n\left(\frac{\cdot}{|\cdot|}\cdot\eta\right)}_{=\tilde{\Psi}_N^{Sh}(\cdot,y)}\Bigg)F_1(r\eta)d\omega(r\eta)\Bigg\|_{l^2(\Omega_r)}=0
\end{align}
and
\begin{align}
\lim_{N\to\infty}\Bigg\|\int\limits_{\Omega_r\setminus\mathcal{C}_r(\rho,\cdot)}\left(\sum_{n=N+1}^{\lfloor\kappa N\rfloor}\frac{2n+1}{4\pi r^2}P_n\left(\frac{\cdot}{|\cdot|}\cdot\eta\right) \right)L_{\eta}^*F_2(r\eta)d\omega(r\eta)\Bigg\|_{l^2(\Omega_r)}=0,
\end{align}
which deals with the relevant contributions to the asymptotic behaviour of \eqref{eqn:eqn1} and \eqref{eqn:shit}. It remains to investigate the boundary integrals appearing on the right hand side of \eqref{eqn:shit}. Observing the differential equation $(1-t^2)P_n''(t)-2tP_n'(t)+n(n+1)P_n(t)=0$, $t\in[-1,1]$, for Legendre polynomials leads to
\begin{align}\label{eqn:bvintzero}
&\!\!\!\!\!\!\frac{1}{r^2}\int_{\partial\mathcal{C}_r(x,\rho)}\tau_{\eta}F_2(r\eta)\frac{2n+1}{{4\pi}}P_{n}\left(\xi\cdot\eta\right)d\sigma(r\eta)
\\&\!\!\!\!\!\!+\frac{1}{r^2}\frac{1}{{ n(n+1)}}\int_{\partial\mathcal{C}_r(x,\rho)}F_2(r\eta)\frac{2n+1}{{4\pi}}L^*_{\xi}\frac{\partial}{\partial\nu_\eta}P_{n}\left(\xi\cdot\eta\right)d\sigma(r\eta)\nonumber
\\=&\frac{1}{r^2}\int_{\partial\mathcal{C}_r(x,\rho)}\tau_{\eta}F_2(r\eta)\frac{2n+1}{{4\pi}}\bigg(P_{n}(\xi\cdot\eta)+\frac{1}{ n(n+1)}\big(1-(\xi\cdot\eta)^2\big)P_{n}''(\xi\cdot\eta)\nonumber
\\&\qquad\qquad\qquad\qquad\qquad\qquad-\frac{2}{ n(n+1)}(\xi\cdot\eta)P_{n}'(\xi\cdot\eta)\bigg)d\sigma(r\eta)\nonumber
\\=&0.\nonumber
\end{align}
Combining \eqref{eqn:vectloceq1}--\eqref{eqn:bvintzero} implies the desired property \eqref{eqn:locpropvect}.
\end{proof}

\begin{thm}\label{thm:convres2}
Assume that parameters $\alpha_{N,n}$, $\tilde{\alpha}_{N,n}$, and $\beta_N$ are positive and suppose that, for some $\delta>0$ and $\kappa>1$,
\begin{align}
\inf_{n=0,\ldots, \lfloor \kappa N\rfloor}\frac{\beta_N\frac{1-\left(\frac{R}{r}\right)^{2(N+1)}}{1-\left(\frac{R}{r}\right)^2}+(\lfloor \kappa N\rfloor+1)^2}{{\alpha}_{N,n}}&=\mathcal{O}\left(N^{-2(1+\delta)}\right),\quad \textit{ for }N\to\infty.\label{eqn:ass4}
\end{align}
An analogous relation shall hold true for $\tilde{\alpha}_{N,n}$. Additionally, let every $N$ be associated with an $\varepsilon_1=\varepsilon_1(N)>0$ and an $\varepsilon_2=\varepsilon_2(N)>0$ such that 
\begin{align}
\lim_{N\to\infty}\varepsilon_1 \left(\frac{R}{r}\right)^{N}=\lim_{N\to\infty}\varepsilon_2\, {N}=0.\label{eqn:propx3}
\end{align}
The functions $f_1:\Omega_R\to\mathbb{R}^3$ and $f_2:\Gamma_r\to\mathbb{R}^3$, $r<R$, are supposed to be such that a unique solution $b$ of \eqref{eqn:bvp441}--\eqref{eqn:bvp444} exists and that the restriction $b^+$ is of class $\mathfrak{h}_s(\Omega_r)$, for some fixed $s\geq 2$. The erroneous input data is given by $f_1^{\eps_1}=f_1+\eps_1 e_1$ and $f_2^{\eps_2}=f_2+\eps_1 e_2$, with $e_1\in l^2(\Omega_R)$ and $e_2\in l^2(\Omega_r)$. If the kernels $\mathbf{\Phi}_N\in\textnormal{\textbf{Pol}}_{N}$ and the wavelet kernel $\tilde{\mathbf{\Psi}}_N\in\textnormal{\textbf{Pol}}_{\lfloor \kappa N\rfloor}$ are the minimizers of the functional $\mathcal{F}$ from \eqref{eqn:mineq4} and if $b_N^\eps$ is given as in \eqref{eqn:approxbj3}, then
\begin{align}
\lim_{N\to\infty}\left\|b^+-b_ {N}^{\varepsilon}\right\|_{l^2(\tilde{\Gamma}_r)}=0.\label{eqn:convres3}
\end{align}
\end{thm}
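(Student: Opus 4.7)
My plan is to mirror the proof of Theorem~\ref{thm:convres} step by step, substituting scalar objects by their vectorial/tensorial counterparts and invoking Proposition~\ref{prop:loc} wherever the scalar argument invoked Proposition~\ref{prop:loc1}. The starting point is the error decomposition \eqref{eqn:errest3}, in which I will show that each of the five summands on the right-hand side tends to zero as $N\to\infty$.

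First I would compare $\mathcal{F}(\mathbf{\Phi}_N,\tilde{\mathbf{\Psi}}_N)$ to $\mathcal{F}(\mathbf{\Phi}_N^{Sh},\tilde{\mathbf{\Psi}}_N^{Sh})$. The Shannon-type symbols annihilate the first two summands of $\mathcal{F}$, so the computation \eqref{eqn:shannonest} carries over essentially verbatim (the only change is that now $\sigma_n=(r/R)^{n+1}$ carries an extra factor $r/R$, which only improves the geometric sum), giving
\[
\mathcal{F}(\mathbf{\Phi}_N^{Sh},\tilde{\mathbf{\Psi}}_N^{Sh})\leq \beta_N\frac{1-(R/r)^{2(N+1)}}{1-(R/r)^{2}}+(\lfloor\kappa N\rfloor+1)^{2}.
\]
Combined with the minimality of $\mathbf{\Phi}_N,\tilde{\mathbf{\Psi}}_N$ and the hypothesis \eqref{eqn:ass4} (and its analogue for $\tilde{\alpha}_{N,n}$), this yields the uniform estimates $|1-\tilde{\mathbf{\Phi}}_{N}^\wedge(n)|=\mathcal{O}(N^{-1-\delta})$ for $n\leq\lfloor\kappa N\rfloor$ and $|1-\mathbf{\Phi}_N^\wedge(n)\sigma_n|=\mathcal{O}(N^{-1-\delta})$ for $n\leq N$, together with $\tilde{\mathbf{\Phi}}_N^\wedge(n)=0$ for $n>\lfloor\kappa N\rfloor$. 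These estimates kill the first two summands of \eqref{eqn:errest3} after division by $(n+\tfrac{1}{2})^s$. The third summand is handled by $|\mathbf{\Phi}_N^\wedge(n)|\leq C/\sigma_n\leq C(R/r)^{N+1}$ together with the first half of \eqref{eqn:propx3}. The $\sup_n|\tilde{\mathbf{\Psi}}_N^\wedge(n)|$ part of the fourth summand does not vanish on its own, but the scalar rescue argument transfers: tracing back the derivation, one only needs $\bigl\|\sum_{i,n,k}\tilde{\mathbf{\Psi}}_N^\wedge(n)(f_r^{(i)})^\wedge(n,k)\tfrac{1}{r}y_{n,k}^{(i)}\bigr\|_{l^2(\Omega_r)}\to 0$ for $f\in\{b^+,e_2\}$, and this follows from Parseval together with $|\tilde{\mathbf{\Psi}}_N^\wedge(n)|\leq|1-\tilde{\mathbf{\Phi}}_N^\wedge(n)|+|1-\mathbf{\Phi}_N^\wedge(n)\sigma_n|$ and the tail-smallness in $\mathfrak{h}_0=l^2$.

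The main obstacle, exactly as in the scalar case, is the last summand of \eqref{eqn:errest3}: $\|\tilde{\mathbf{\Psi}}_N\|_{L^2([-1,1-\rho])}$ does not vanish in general. Tracing back its origin, I would instead show directly that
\[
\Bigl\|\int_{\Omega_r\setminus\mathcal{C}_r(\rho,\cdot)}\tilde{\mathbf{\Psi}}_N(\cdot,y)\,f(y)\,d\omega(y)\Bigr\|_{l^2(\Omega_r)}\longrightarrow 0
\]
for $f=b^+$ and, after multiplication by $\eps_2$, for $f=e_2$. The standard device is to split $\tilde{\mathbf{\Psi}}_N=\tilde{\mathbf{\Psi}}_N^{Sh}+(\tilde{\mathbf{\Psi}}_N-\tilde{\mathbf{\Psi}}_N^{Sh})$. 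The first piece is the one genuinely new input: Proposition~\ref{prop:loc} provides exactly the vectorial localization required for $b^+\in\mathfrak{h}_s(\Omega_r)$, $s\geq 2$. For the second piece, Young's inequality combined with the analogue of the scalar estimate $\|\tilde{\mathbf{\Psi}}_N^{Sh}-\tilde{\mathbf{\Psi}}_N\|_{L^2([-1,1-\rho])}^2=\mathcal{O}(N^{-2\delta})$, which follows from the uniform $\mathcal{O}(N^{-1-\delta})$-bounds above and the tensorial Gram-matrix entries from Lemma~\ref{prop:minsol4}, gives the desired decay. The noise term is additionally controlled by $\|\tilde{\mathbf{\Psi}}_N^{Sh}\|_{L^2([-1,1-\rho])}\leq CN$ together with the second half of \eqref{eqn:propx3}. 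Putting these pieces together in \eqref{eqn:errest3} yields \eqref{eqn:convres3}; the genuinely non-routine work is confined to Proposition~\ref{prop:loc}, which is precisely why that result is isolated and proved separately.
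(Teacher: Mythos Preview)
Your proposal is correct and mirrors exactly the route the paper intends: the proof of Theorem~\ref{thm:convres2} is explicitly omitted in the paper as being ``similar'' to that of Theorem~\ref{thm:convres}, with Proposition~\ref{prop:loc} replacing Proposition~\ref{prop:loc1} as the only genuinely new ingredient, and you have identified precisely this structure. One minor remark: for the $\mathcal{O}(N^{-2\delta})$ bound on $\|\tilde{\mathbf{\Psi}}_N^{Sh}-\tilde{\mathbf{\Psi}}_N\|_{L^2([-1,1-\rho])}^2$ it is cleaner to bound by the full-interval norm and use the orthonormality expression for $\|\cdot\|_{L^2([-1,1])}^2$ (as in \eqref{eqn:etvect}) rather than the truncated Gram entries $P_{n,m}^\rho$ of Lemma~\ref{prop:minsol4}.
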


\begin{lem}\label{lem:lockernel2}
Assume that the parameters $\alpha_{N,n}$, $\tilde{\alpha}_{N,n}$, and $\beta_N$ are positive and suppose that, for some $\delta>0$ and $\kappa>1$,
\begin{align}
\inf_{n=0,\ldots, \lfloor \kappa N\rfloor}\frac{\beta_N\frac{1-\left(\frac{R}{r}\right)^{2(N+1)}}{1-\left(\frac{R}{r}\right)^2}+(\lfloor \kappa N\rfloor+1)^2}{{\alpha}_{N,n}}&=\mathcal{O}\left(N^{-2(1+\delta)}\right),\quad \textit{ for }N\to\infty.\label{eqn:ass5}
\end{align}
An analogous relation shall hold true for $\tilde{\alpha}_{N,n}$. If the scaling kernel $\mathbf{\Phi}_N\in\textnormal{\textbf{Pol}}_{N}$ and the wavelet kernel $\tilde{\mathbf{\Psi}}_N\in\textnormal{\textbf{Pol}}_{\lfloor \kappa N\rfloor}$ are the minimizers of the functional $\mathcal{F}$ from \eqref{eqn:mineq4}, then
\begin{align} \label{eqn:optloc3}
\lim_{N\to\infty}\frac{\big\|\tilde{\mathbf{\Psi}}_N\big\|^2_{L^2([-1,1-\rho])}}{\big\|\tilde{\mathbf{\Psi}}_N\big\|^2_{L^2([-1,1])}}=0.
\end{align}
\end{lem}

\begin{proof}
Set $F_N(t)={|\tilde{\mathbf{\Psi}}_N(x,y)|^2}/{\|\tilde{\mathbf{\Psi}}_N\|^2_{L^2([-1,1])}}$, where $t=\xi \cdot\eta $. Since $\tilde{\mathbf{\Psi}}_N$ is tensor-zonal, $F_N$ is well-defined and can be regarded as a density function of a random variable $t\in[-1,1]$. From here on, the proof is essentially the same as for Lemma \ref{lem:loc} and we have to show $\lim_{N\to\infty}E_N(t)=1$ and $\lim_{N\to\infty}V_N(t)=0$. We just indicate the proof for $E_N(t)$, the case of $V_N(t)$ follows analogously. Setting $\tilde{\Psi}_N(t)=|\tilde{\mathbf{\Psi}}_N(x,y)|$, again with  $t=\xi \cdot\eta $, we get
\begin{align}\label{eqn:ejtvect}
&2\pi r^2\int_{-1}^1t|\tilde{\Psi}_N(t)|^2dt=\int_{\Omega_r}\xi \cdot\eta \left|\tilde{\mathbf{\Psi}}_N\left(r\xi ,r\eta \right)\right|^2d\omega(r\eta)
\\&=\frac{1}{r^2}\sum_{i=1}^2\sum_{n=0_i}^{\lfloor \kappa N\rfloor}\sum_{k=1}^{2n+1}\sum_{m=0_i}^{{\lfloor \kappa N\rfloor}}\sum_{l=1}^{2m+1}\tilde{\mathbf{\Psi}}_N^\wedge(n)\tilde{\mathbf{\Psi}}_N^\wedge(m)y_{n,k}^{(i)}\left(\xi \right)\cdot y_{m,l}^{(i)}\left(\xi \right)\nonumber
\\&\qquad\qquad\qquad\qquad\qquad\qquad\quad\times\int_{\Omega_r}(\xi \cdot\eta)\,  y_{n,k}^{(i)}\left(\eta \right)\cdot y_{m,l}^{(i)}\left(\eta \right)d\omega(r\eta)\nonumber
\\&=\frac{1}{r^2}\sum_{n=0}^{\lfloor \kappa N\rfloor}\sum_{k=1}^{2n+1}\sum_{m=0}^{{\lfloor \kappa N\rfloor}}\sum_{l=1}^{2m+1}\tilde{\mathbf{\Psi}}_N^\wedge(n)\tilde{\mathbf{\Psi}}_N^\wedge(m)Y_{n,k}\left(\xi \right)Y_{m,l}\left(\xi \right)\nonumber
\\&\qquad\qquad\qquad\qquad\qquad\quad\times\int_{\Omega_r}(\xi \cdot\eta)\, Y_{n,k}\left(\eta \right)Y_{m,l}\left(\eta \right)d\omega(r\eta)\nonumber
\\&\quad+\frac{1}{r^2}\sum_{n=1}^{\lfloor \kappa N\rfloor}\sum_{k=1}^{2n+1}\sum_{m=1}^{{\lfloor \kappa N\rfloor}}\sum_{l=1}^{2m+1}\frac{\tilde{\mathbf{\Psi}}_N^\wedge(n)\tilde{\mathbf{\Psi}}_N^\wedge(m)}{n(n+1)m(m+1)}\nabla^*_{\xi }Y_{n,k}\left(\xi \right)\cdot\nabla^*_{\xi }Y_{m,l}\left(\xi \right)\nonumber
\\&\qquad\qquad\qquad\qquad\qquad\quad\times\xi \cdot\int_{\Omega_r}\eta \left(\nabla^*_{\eta }Y_{n,k}\left(\eta \right)\cdot\nabla^*_{\eta }Y_{m,l}\left(\eta \right)\right)d\omega(r\eta)\nonumber
\end{align}\normalsize
The first term on the right hand side of \eqref{eqn:ejtvect} can be written as a one-dimensional integral
\begin{align}\label{eqn:firstsum}
&\frac{1}{r^2}\sum_{n=0}^{\lfloor \kappa N\rfloor}\sum_{k=1}^{2n+1}\sum_{m=0}^{{\lfloor \kappa N\rfloor}}\sum_{l=1}^{2m+1}\tilde{\mathbf{\Psi}}_N^\wedge(n)\tilde{\mathbf{\Psi}}_N^\wedge(m)Y_{n,k}\left(\xi \right)Y_{m,l}\left(\xi \right)
\\&\qquad\qquad\qquad\qquad\quad\,\,\,\times\int_{\Omega_r}(\xi \cdot\eta)\, Y_{n,k}\left(\eta \right)Y_{m,l}\left(\eta \right)d\omega(r\eta)\nonumber
\\&=\sum_{n=0}^{\lfloor \kappa N\rfloor}\sum_{m=0}^{{\lfloor \kappa N\rfloor}}\frac{(2n+1)(2m+1)}{8\pi}\tilde{\mathbf{\Psi}}_N^\wedge(n)\tilde{\mathbf{\Psi}}_N^\wedge(m)\int_{-1}^1tP_{n}\left(t\right)P_{m}\left(t\right)dt.\nonumber
\end{align}
The second term requires significantly more effort. We start by observing that
\begin{align}
&\int_{\Omega_r}\eta \left(\nabla^*_{\eta }Y_{n,k}\left(\eta \right)\cdot\nabla^*_{\eta }Y_{m,l}\left(\eta \right)\right)d\omega(r\eta)\label{eqn:1green}
\\&=-\frac{1}{2}\int_{\Omega_r}Y_{m,l}\left(\eta \right)\nabla_{\eta }^*\cdot\left(\eta \otimes\nabla^*_{\eta }Y_{n,k}\left(\eta \right)\right)+Y_{n,k}\left(\eta \right)\nabla_{\eta }^*\cdot\left(\eta \otimes\nabla^*_{\eta }Y_{m,l}\left(\eta \right)\right)d\omega(r\eta)\nonumber
\end{align}

\begin{align}
&=-\frac{1}{2}\int_{\Omega_r}\nabla_{\eta }^*\left(Y_{n,k}\left(\eta \right)Y_{m,l}\left(\eta \right)\right)d\omega(r\eta)\nonumber
+\frac{n(n+1)}{2}\int_{\Omega_r}\eta  Y_{n,k}\left(\eta \right)Y_{m,l}\left(\eta \right)d\omega(r\eta)\nonumber
\\&\quad+\frac{m(m+1)}{2}\int_{\Omega_r}\eta  Y_{n,k}\left(\eta \right)Y_{m,l}\left(\eta \right)d\omega(r\eta)\nonumber
\\&=\frac{n(n+1)}{2}\int_{\Omega_r}\eta  Y_{n,k}\left(\eta \right)Y_{m,l}\left(\eta \right)d\omega(r\eta)\nonumber
+\frac{m(m+1)}{2}\int_{\Omega_r}\eta  Y_{n,k}\left(\eta \right)Y_{m,l}\left(\eta \right)d\omega(r\eta),\nonumber
\end{align}\normalsize
where we have used $\nabla_{\eta}^*\cdot\left(\eta\otimes\nabla^*_{\eta}Y_{n,k}\left(\eta\right)\right)=\eta\Delta_\eta^*Y_{n,k}\left(\eta\right)+\nabla_\eta^*Y_{n,k}\left(\eta\right)$. Plugging  \eqref{eqn:1green} into the second term on the right hand side of Equation \eqref{eqn:ejtvect}, and using the expression $2\nabla_\xi ^*Y_{n,k}(\xi )\cdot \nabla_\xi ^*Y_{m,l}(\xi )=\Delta_\xi ^*(Y_{n,k}(\xi )Y_{m,l}(\xi ))+(n(n+1)+m(m+1))Y_{n,k}(\xi )Y_{m,l}(\xi )$, leads to
\begin{align}
&\frac{1}{r^2}\sum_{n=1}^{\lfloor \kappa N\rfloor}\sum_{k=1}^{2n+1}\sum_{m=1}^{{\lfloor \kappa N\rfloor}}\sum_{l=1}^{2m+1}\frac{\tilde{\mathbf{\Psi}}_N^\wedge(n)\tilde{\mathbf{\Psi}}_N^\wedge(m)}{n(n+1)m(m+1)}\nabla^*_{\xi }Y_{n,k}\left(\xi \right)\cdot\nabla^*_{\xi }Y_{m,l}\left(\xi \right)\label{eqn:2term}
\\&\qquad\qquad\qquad\qquad\quad\times\xi \cdot\int_{\Omega_r}\eta \nabla^*_{\eta }Y_{n,k}\left(\eta \right)\cdot\nabla^*_{\eta }Y_{m,l}\left(\eta \right)d\omega(r\eta)\nonumber
\\&=\frac{1}{2r^2}\sum_{n=1}^{\lfloor \kappa N\rfloor}\sum_{k=1}^{2n+1}\sum_{m=1}^{{\lfloor \kappa N\rfloor}}\sum_{l=1}^{2m+1}\tilde{\mathbf{\Psi}}_N^\wedge(n)\tilde{\mathbf{\Psi}}_N^\wedge(m)\frac{n(n+1)}{m(m+1)}\nonumber
\\&\qquad\qquad\qquad\quad\qquad\qquad\times\int_{\Omega_r}(\xi\cdot\eta)\, Y_{n,k}\left(\xi \right)Y_{m,l}\left(\xi \right)Y_{n,k}\left(\eta \right)Y_{m,l}\left(\eta \right)d\omega(r\eta)\nonumber
\\&\quad+\frac{1}{2r^2}\sum_{n=1}^{\lfloor \kappa N\rfloor}\sum_{k=1}^{2n+1}\sum_{m=1}^{{\lfloor \kappa N\rfloor}}\sum_{l=1}^{2m+1}\tilde{\mathbf{\Psi}}_N^\wedge(n)\tilde{\mathbf{\Psi}}_N^\wedge(m)\nonumber
\\&\qquad\qquad\qquad\quad\qquad\qquad\times\int_{\Omega_r}(\xi\cdot\eta)\, Y_{n,k}\left(\xi \right)Y_{m,l}\left(\xi \right)Y_{n,k}\left(\eta \right)Y_{m,l}\left(\eta \right)d\omega(r\eta)\nonumber
\\&\quad+\frac{1}{2r^2}\sum_{n=1}^{\lfloor \kappa N\rfloor}\sum_{k=1}^{2n+1}\sum_{m=1}^{{\lfloor \kappa N\rfloor}}\sum_{l=1}^{2m+1}\frac{\tilde{\mathbf{\Psi}}_N^\wedge(n)\tilde{\mathbf{\Psi}}_N^\wedge(m)}{n(n+1)}\nonumber
\\&\qquad\qquad\qquad\quad\qquad\qquad\times\xi \cdot\Delta_{\xi }^*\int_{\Omega_r}\eta Y_{n,k}\left(\xi \right)Y_{m,l}\left(\xi \right)Y_{n,k}\left(\eta \right)Y_{m,l}\left(\eta \right)d\omega(r\eta)\nonumber
\\&=\sum_{n=1}^{\lfloor \kappa N\rfloor}\sum_{m=1}^{{\lfloor \kappa N\rfloor}}\tilde{\mathbf{\Psi}}_N^\wedge(n)\tilde{\mathbf{\Psi}}_N^\wedge(m)\left(\frac{1}{2}+\frac{n(n+1)-2}{2m(m+1)}\right)\frac{(2n+1)(2m+1)}{8\pi}\int_{-1}^1tP_n(t)P_m(t)dt,\nonumber
\end{align}\normalsize
where we have used the addition theorem, the property $\Delta_{\xi }^*P_n(\xi \cdot\eta)=\Delta_{\eta}^*P_n(\xi \cdot\eta)$, Green's formulas, and $\Delta_{\eta}^*\,\eta=-2\eta$ in the last row. Eventually, combining \eqref{eqn:ejtvect}, \eqref{eqn:firstsum}, and \eqref{eqn:2term}, we are lead to
\begin{align}\label{eqn:inttpsiN}
&\int_{-1}^1t|\tilde{\Psi}_N(t)|^2dt
\\&=\sum_{n=1}^{\lfloor \kappa N\rfloor}\sum_{m=1}^{{\lfloor \kappa N\rfloor}}\tilde{\mathbf{\Psi}}_N^\wedge(n)\tilde{\mathbf{\Psi}}_N^\wedge(m)\left(\frac{3}{4}+\frac{n(n+1)-2}{4m(m+1)}\right)\frac{(2n+1)(2m+1)}{8\pi^2r^2}\int_{-1}^1tP_n(t)P_m(t)dt.\nonumber
\end{align}\normalsize
Observing $\lim_{m,n\to\infty}\frac{3}{4}+\frac{n(n+1)-2}{4m(m+1)}=1$ if $m\in\{n-1,n,n+1\}$, we can proceed with \eqref{eqn:inttpsiN} in a similar manner as in  \eqref{eqn:et1} and \eqref{eqn:et11}. Together with
\begin{align}\label{eqn:etvect}
\|\tilde{\mathbf{\Psi}}_N\|_{L^2([-1,1])}^2=\sum_{n=0}^{\lfloor \kappa N\rfloor} \frac{2n+1}{4\pi^2r^2}|\tilde{\mathbf{\Psi}}_N^\wedge(n)|^2=\mathcal{O}(N^{-2\delta})+\sum_{n=N+1}^{\lfloor \kappa N\rfloor} \frac{2n+1}{4\pi^2r^2}|\tilde{\mathbf{\Psi}}_N^\wedge(n)|^2,
\end{align}
this leads to the desired property
\begin{align}
\lim_{N\to\infty}E_N(t)=\lim_{N\to\infty}\int_{-1}^1 t|F_N(t)|^2 dt=\lim_{N\to\infty}\frac{\int_{-1}^1 t|\tilde{\Psi}_N(t)|^2 dt}{\|\tilde{\mathbf{\Psi}}_N\|_{L^2([-1,1])}^2}=1,
\end{align}
concluding the proof.
\end{proof}

\section{Conclusion}\label{sec:concl}
The combination of satellite and ground data is an important step to obtain high-resolution models, e.g., of the Earth's gravitational and crustal magnetic field. The numerical examples in this article have shown that the proposed approach via optimized kernels yields improved results over a naive approximation by Shannon-type kernels. Furthermore, it proved superior to spline approximation/interpolation from ground data in $\Gamma_r$ and downward continuation of satellite data on $\Omega_R$ via TSVD/RCM over a wide range of scenarios. Splines can only compete if the noise level in $\Gamma_r$ is not larger than the noise level on $\Omega_R$. Downward continuation, on the other hand, is comparable only if the noise level on $\Omega_R$ is significantly smaller than in $\Gamma_r$. The combined approach presented here automatically weighs the different properties of ground and satellite data against each other. In this sense, a crucial ingredient for the construction of the optimized kernels is the choice of the parameters $\beta_N$, $\alpha_{N,n}$, $\tilde{\alpha}_{N,n}$, and the truncation degree $N$. They may be motivated by a priori knowledge of the noise levels $\eps_1$, $\eps_2$, and the size of the region $\Gamma_r$, but in general they will require an adequate yet to investigate parameter choice strategy.
\\[2ex]

\textbf{Acknowledgements.} This work was partly conducted at UNSW Australia and supported by a fellowship within the Postdoc-program of the German Academic Exchange Service (DAAD). The author thanks Ian Sloan, Rob Womersley, and Yu  Guang Wang for valuable discussions on Proposition \ref{prop:loc1} as well as the two reviewers for their valuable comments and suggestions to improve the paper.


\end{document}